\documentclass[numbers,webpdf,imaiai]{ima-authoring-template}%
\graphicspath{{Fig/}}
\usepackage{subfigure}

\usepackage{tikz}
\usetikzlibrary{positioning}
\theoremstyle{thmstyletwo}
\newcommand{\etal}{et al.}

\newtheorem{theorem}{Theorem}
\newtheorem{proposition}[theorem]{Proposition}   
\newtheorem{lemma}[theorem]{Lemma}               
\newtheorem{remark}{Remark}                      
\numberwithin{equation}{section}
\usepackage{amsthm,amsmath,amssymb}
\usepackage{bm}
\usepackage{mathrsfs} 
\usepackage{subfigure}
\usepackage{booktabs}

\newcommand{\Rmnum}[1]{\expandafAter@slowromancap\romannumeral #1@}
\begin{document}

	\copyrightyear{2021}
	\vol{00}
	\pubyear{2021}
	\access{Advance Access Publication Date: Day Month Year}
	\appnotes{Paper}
	\copyrightstatement{Published by Oxford University Press on behalf of the Institute of Mathematics and its Applications. All rights reserved.}
	\firstpage{1}
	
	
	\title{Observer design and boundary output feedback stabilization  for semilinear parabolic system over general multidimensional domain}

        \author{Kai Liu
	\address{\orgdiv{School of Mathematics and KL-AAGDM}, \orgname{Tianjin University}, \orgaddress{ \postcode{300354}, \state{Tianjin}, \country{China}}}}
\author{Hua-Cheng Zhou \address{\orgdiv{School of Mathematics}, \orgname{Central South University}, \orgaddress{ \postcode{410075}, \state{Changsha}, \country{China}}}}
\author{Zhong-Jie Han\footnote{Corresponding author: Zhong-Jie Han \href{zjhan@tju.edu.cn}{(zjhan@tju.edu.cn)}}
	\address{\orgdiv{School of Mathematics and KL-AAGDM}, \orgname{Tianjin University}, \orgaddress{ \postcode{300354}, \state{Tianjin}, \country{China}}}}
	\author{Xiangyang Peng\address{\orgdiv{School of Intelligence Science and Engineering and the Guangdong Key Laboratory of Intelligent Morphing Mechanisms and Adaptive Robotics}, \orgname{Harbin Institute of Technology (Shenzhen)}, \orgaddress{ \postcode{518055}, \state{Shenzhen}, \country{China}}}}
	\authormark{Kai Liu et al.}

	\received{Date}{0}{Year}
	\revised{Date}{0}{Year}
	\accepted{Date}{0}{Year}
	\abstract{\textbf{Abstract}:
    This paper investigates the output feedback stabilization of parabolic equation with Lipschitz nonlinearity over general multidimensional domain using spectral geometry theories. First, a novel nonlinear observer is designed, and the error system is shown to achieve any prescribed decay rate by leveraging the Berezin-Li-Yau inequality from spectral geometry, which also provides effective guidance for sensor placement. Subsequently, a finite-dimensional state feedback controller is proposed, which ensures the quantitative rapid stabilization of the linear part. By integrating this control law with the observer, an efficient boundary output feedback control strategy is developed. The feasibility of the proposed control design is rigorously verified for arbitrary Lipschitz constants, thereby resolving a persistent theoretical challenge. Finally, a numerical case study confirms the effectiveness of the approach.
    }    
\keywords{nonlinear parabolic system; stabilization;  output feedback; state observer;  Berezin-Li-Yau inequality.}
	
	\maketitle
\section{Introduction}
	\subsection{Background}
	Nowadays, ongoing technological requirements have shifted research focus to consider the  nonlinear partial differential equation (PDE) systems, which can be used to properly describe the dynamic behavior of the biological, physical and chemical system \cite{furter1989local,hagen2004distributed,temam2012infinite}. When it comes to the stabilization problem of nonlinear PDE systems, there are some elegant control design schemes, including but not limited to finite-dimensional control design \cite{el2023exponential,hagen2004distributed,karafyllis2021lyapunov,katz2023global,lhachemi2022nonlinear,Lasiecka_Triggiani_2000}, backstepping control design \cite{chowdhury2024local,vazquez2008control} and linear quadratic control design \cite{hagen2003spillover,hagen2004distributed}.  
	
	The objective of this paper is to develop  control design to stabilize  multidimensional nonlinear parabolic equation. There have been some important literature addressing multidimensional parabolic PDE systems \cite{barbu2013boundary,feng2022boundary,lhachemi2025boundary,munteanu2017stabilisation,wang2024delayed,xiang2024}. For the linear multidimensional system, Feng \etal \  \cite{feng2022boundary} developed a dynamic feedback control for stabilizing an unstable heat equation  using dynamic compensation approach. Furthermore, Lhachemi \etal ~ \cite{lhachemi2025boundary}  achieved the exponential stabilization for 2-D and 3-D parabolic equations, focusing on systems governed by linear differential operators that can be diagonalized by weighting functions. For the delayed cases, Wang and Fridman \cite{wang2024delayed} proposed a kind of finite-dimensional observer-based control for 2-D linear parabolic PDEs with time-varying input/output delays, utilizing the Lyapunov functional in combination with Halanay inequality to achieve exponential stability.  For the nonlinear cases, Barbu \cite{barbu2013boundary} designed the linear boundary control strategy for the multidimensional nonlinear parabolic system, demonstrating exponential stabilization for equilibrium solutions under certain conditions and assumptions.  Munteanu \cite{munteanu2017stabilisation} extended Barbu's  work \cite{barbu2013boundary} by removing the linear independence assumption of the eigenfunctions, thus providing a linear state feedback control design applicable to a broader class of nonlinear parabolic equations. However, the explicit control design for globally stabilizing multi-dimensional parabolic systems involving arbitrary Lipschitz nonlinearity remains unresolved.

   On the other hand, a significant challenge in practical implementations arises from the fact that only partial state measurements are typically available, which necessitates the development of effective state reconstruction techniques for nonlinear parabolic PDE systems. Although various observer designs have been proposed for one-dimensional linear or nonlinear parabolic PDE systems \cite{hagen2003spillover,liu2025output,meurer2013extended,smyshlyaev2005backstepping}, as well as for multidimensional linear systems \cite{feng2022boundary,lhachemi2025boundary}, the literature on observer design for multidimensional nonlinear parabolic PDE systems remains limited. 

   To address these challenges and overcome the aforementioned limitations, this paper proposes a novel observer-based  control framework. The proposed method is designed to ensure the stabilization of nonlinear parabolic PDE systems over general multidimensional domain when only partial state measurements are available, thereby offering a more flexible and widely applicable solution.
	
	\subsection{Problem formulation}
Consider the following nonlinear parabolic system:
\begin{equation}\label{old}
\begin{cases}
\partial_t z(x,t) = \Delta z(x,t) + f(z(x,t)), & t>0,\;x\in\Omega, \\
z(x,t) = U(x,t), & t>0,\;x\in\varGamma_1, \\
z(x,t) = 0, & t>0,\;x\in\varGamma_2, \\ 
z(x,0) = z^0(x), & x \in \Omega,
\end{cases}
\end{equation}
where $\Omega \subset \mathbb{R}^d,\;d\in \mathbb{N}^+$ is a smooth bounded domain with boundary $\partial\Omega = \varGamma_1 \cup \varGamma_2$. Here, $\varGamma_1$ and $\varGamma_2$ are disjoint, and $\varGamma_1$ has positive measure. The boundary control $U(x,t) = \sum_{i=1}^N U_i(x,t)$ acts on $\varGamma_1$, where the positive integer $N$ will be specified later. The nonlinear function $f: \mathbb{R} \to \mathbb{R}$ is known and satisfies both global Lipschitz condition with constant $L > 0$ and the equilibrium condition $f(0) = 0$. The initial state $z^0\in L^2(\Omega)$. 

Suppose that the domain $\Omega$ is partitioned into $M$ disjoint Lipschitz subdomains $\{\Omega_i\}_{i=1}^M$ such that $\Omega$ equals the interior of the closure of $\cup_{i=1}^M\Omega_i$ satisfying 
\begin{equation}\label{condition6}
\max_{1 \leq i \leq M} \operatorname{Vol}_d(\Omega_i) < C_d^{-1}\left(\frac{d}{L(d+2)}\right)^{d/2},
\end{equation}
where $\operatorname{Vol}_d(\Omega_i)$ denotes the $d$-dimensional volume (Lebesgue measure) of $\Omega_i$ for $i = 1, \dots, M$, $C_d :=  (4\pi)^{-d/2}[\Gamma(d/2 + 1)]^{-1}$ is Weyl constant and $\Gamma(\cdot)$ is the Gamma function. The measurement $h(x,t)$ of system \eqref{old} is defined as:
	\begin{equation}\label{h1}
	h(x,t) = \sum_{1 \leq i < j \leq M} \chi_{\varGamma_{ij}}(x) z(x,t),
	\end{equation}
	where $\varGamma_{ij} = \Omega_i \cap \Omega_j$ is the interface between adjacent subdomains, $\chi_{\varGamma_{ij}}$ is the indicator function of $\varGamma_{ij}$. 
    
    To facilitate understanding of the domain decomposition, Fig~\ref{fig:domain_decomp} provides a 2-D example where the  domain $\Omega$ is divided into six  subdomains $\Omega_1,\dots,\Omega_6$, with $\varGamma_{ij}\;(i,j=1,2,...,6)$  denoting the interface between adjacent subdomains $\Omega_i$ and $\Omega_j$.
	
	\begin{figure}[htbp]
		\centering
		\includegraphics[width=0.3\textwidth]{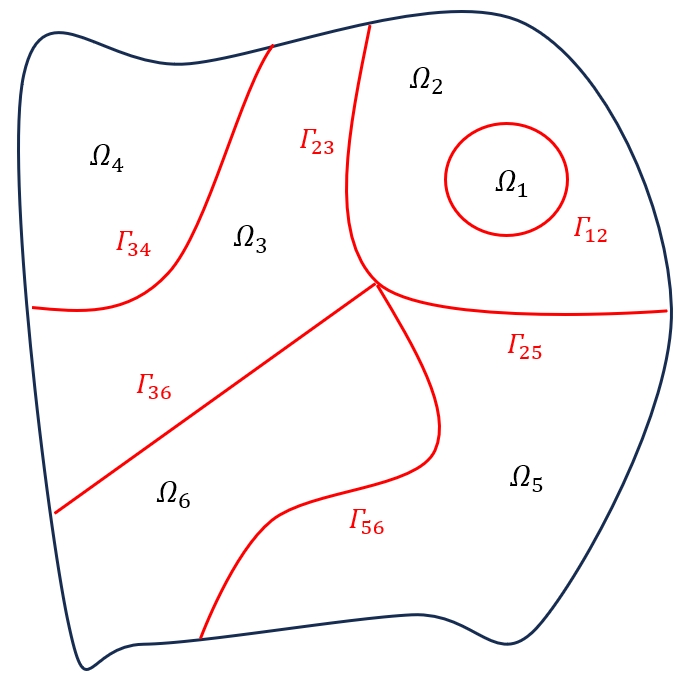}
		\caption{Illustrative example}
		\label{fig:domain_decomp}
	\end{figure}

This paper aims to develop a novel observer design for system \eqref{old} under the measurement  \eqref{h1}, and proposes an effective feedback control law to achieve exponential stabilization of the multidimensional nonlinear parabolic PDE system.
	\subsection{Novelties and Contributions} 
\noindent The novelties and contributions of this paper are listed as follows:
\begin{itemize}
    \item[(1)] We propose a novel nonlinear observer architecture using measurements \eqref{h1} to estimate the state of multidimensional parabolic PDEs in real-time—essential for designing output feedback control strategies. Furthermore, the observer's tracking error is exponentially convergent to $0$. To our knowledge, this work presents the first observer design for such multidimensional nonlinear parabolic PDE systems.
    \item[(2)] We develop a constructive application of spectral geometry theory to the sensor placement problem in multidimensional domains. Specifically, building upon the Berezin--Li--Yau inequality \cite{berezin1972convex,li1983schrodinger}, we establish a sensor placement configuration strategy that guarantees exponential convergence of the observer for general multidimensional domains.
    \item[(3)] We present an observer-based boundary feedback control law for multidimensional nonlinear parabolic PDE systems. In contrast to existing results \cite{katz2022global,katz2023global,lhachemi2022global}, we provide a rigorous proof that for spatial dimensions $d \in \{1, 2, 3\}$, the proposed control strategy guarantees exponential stabilization even for systems with arbitrarily large Lipschitz coefficients.
\end{itemize}
	\subsection{Organization}
    The remainder of this paper is structured as follows. Section \ref{sec2} develops the nonlinear observer and proves tohat its  error system is exponential stable. Section \ref{sec3} designs the boundary output–feedback controller and establishes the stability of the close-loop. Section \ref{sec4} provides numerical simulations to validate the theoretical results. Section \ref{sec5} concludes the paper and discusses possible directions for future research.
	\subsection{Notation} 

Let $\mathbb{R}^{N_1 \times N_2}$ denote the space of all real-valued $N_1 \times N_2$ matrices, equipped with the  2-norm $\|\cdot\|_2$.
For a matrix $\mathbf{A} \in \mathbb{R}^{N \times N}$, we denote its inverse (when it exists) by $\mathbf{A}^{-1}$.  

The space $L^2(\Omega)$ consists of all square-integrable functions $z\colon \Omega \to \mathbb{R}$ endowed with the inner product $\langle f, g \rangle = \int_\Omega f(x)g(x) \, \mathrm{d}x$ and induced norm $\|z\|^2 = \langle z, z \rangle$, while $L^2(\varGamma_1)$ denotes the Hilbert space of functions on the boundary $\varGamma_1$ with inner product $\langle w, v \rangle_{L^2(\varGamma_1)} = \int_{\varGamma_1} w(x)v(x) \, \mathrm{d}\sigma$ and norm $\|w\|_{L^2(\varGamma_1)}^2 = \langle w, w \rangle_{L^2(\varGamma_1)} < \infty$, where $\sigma$ is the Lebesgue measure on $\varGamma_1$.  

For functions $f(N) $ and $g(N) $, $f(N) \lesssim g(N)$  if there exist constants $C > 0 $ and $N_0 > 0 $ such that $|f(N)| \leq C|g(N)| $ for all $N > N_0 $. The notation $f(N) = o(g(N))$ indicates that $\lim_{N \to \infty} f(N)/g(N) = 0$. 

	\section{State Estimation}\label{sec2}
    \subsection{Preliminaries}
    	Let us consider the Sturm-Liouville operator $\mathscr{A}$:
	\begin{equation*}
	\begin{cases}
	\mathscr{A}y=-\Delta y, \; y \in \mathscr{D}(\mathscr{A}),\\
	\mathscr{D}(\mathscr{A})=\{y\in H^2(\Omega)\cap H_0^1(\Omega)|\; y=0\;\mbox{on}\; \varGamma_2 \},
	\end{cases}
	\end{equation*}
	where $ H_0^1(\Omega)=\{y\in H^1(\Omega)| y=0\; \mbox{on}\; \varGamma_1\}$. 
    Note that $\mathscr{A}$ is a self-adjoint operator with a compact resolvent, which guarantees the existence of a sequence of real eigenvalues $\{\lambda_n\}_{n=1}^\infty$. These eigenvalues, denoted by $\{\lambda_n\}_{n=1}^\infty$, satisfy
	\begin{equation*}\label{eigen}
	0 < \lambda_1 < \lambda_2 \leq \lambda_3 \leq ... \leq \lambda_n \leq ... \to +\infty,
	\end{equation*}
	with corresponding eigenfunctions $\{\phi_n\}_{n=1}^\infty$ determined by
	\begin{equation*}\label{eigenfunction}
	\begin{cases}
	\Delta \phi_n = -\lambda_n\phi_n, \quad \|\phi_n\| = 1,\\
	\phi_n = 0, \quad x\in \partial\Omega.
	\end{cases}
	\end{equation*}
    All eigenvalues have finite algebraic multiplicity, and the first eigenvalue $\lambda_1$ is simple \cite{evans2010partial}. The spectral properties of the operator $\mathscr{A}$ are characterized by the following well-known results
\begin{proposition}\label{pro-1}
The eigenvalue-eigenfunction pairs $(\lambda_k, \phi_k)_{k=1}^{\infty}$ possess the following spectral properties:\\
    \textbf{(1)} Weyl's Law \cite{weyl1912asymptotische}: The eigenvalues of the operator $\mathscr{A}$ satisfy the asymptotic behavior
    $$
    \lambda_k \sim \left(\frac{k}{C_d \mathrm{Vol}_d(\Omega)}\right)^{\frac{2}{d}},
    $$
    where $\sim$ denotes asymptotic equivalence as $k \to \infty$.\\
    \textbf{(2)} Berezin-Li-Yau Inequality\cite{berezin1972convex,li1983schrodinger}: The $k$-th  eigenvalue of the operator $\mathscr{A}$ satisfies the lower bound
    $$
    \lambda_k \geq \frac{d}{d+2} \left( \frac{k}{C_d \mathrm{Vol}_d(\Omega)} \right)^{\frac{2}{d}}.
    $$\\
    \textbf{(3)} Rellich Inequality \cite{hassell2002upper}: There exist constants $c, C > 0$ independent of $k$ such that $$c \lambda_k \leq \|\partial_{\bm{n}}\phi_k\|_{L^2(\varGamma_1)}^2 \le C \lambda_k,$$
    where $\bm{n}$ is the unit outward normal vector to $\varGamma_1$
\end{proposition}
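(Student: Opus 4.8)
Since this proposition collects three classical facts, my plan is to handle each by its standard machinery and to flag the single estimate that genuinely requires work. For part (1), I would pass to the eigenvalue-counting function $N(\lambda)=\#\{k:\lambda_k\le\lambda\}$ and prove the equivalent statement $N(\lambda)\sim C_d\,\mathrm{Vol}_d(\Omega)\,\lambda^{d/2}$, from which the stated form of Weyl's law follows by inversion. The most economical route is to establish the short-time heat-trace asymptotic $\sum_{k}e^{-\lambda_k t}\sim(4\pi t)^{-d/2}\mathrm{Vol}_d(\Omega)$ as $t\to 0^+$ (via a parametrix for $e^{t\Delta}$, the leading term being the free Gaussian kernel integrated over $\Omega$) and then to invoke Karamata's Tauberian theorem to transfer the asymptotic to $N(\lambda)$; Dirichlet--Neumann bracketing by small cubes is an equally valid alternative. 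In either case the constant is exactly $C_d=(4\pi)^{-d/2}/\Gamma(d/2+1)$, matching the statement \cite{weyl1912asymptotische}.

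For part (2), I would first prove the stronger \emph{trace} inequality $\sum_{j=1}^{k}\lambda_j\ge\frac{d}{d+2}(C_d\mathrm{Vol}_d(\Omega))^{-2/d}k^{1+2/d}$ and then read off the pointwise bound. Extend each orthonormal eigenfunction by zero to $\mathbb{R}^d$ and set $\rho(\xi)=\sum_{j=1}^{k}|\widehat{\phi_j}(\xi)|^2$. Parseval with orthonormality gives $\int_{\mathbb{R}^d}\rho=k$; Bessel's inequality against the family $e^{i\xi\cdot x}\mathbf{1}_\Omega$ gives the pointwise ceiling $0\le\rho(\xi)\le(2\pi)^{-d}\mathrm{Vol}_d(\Omega)$; and $\sum_{j}\lambda_j=\int_{\mathbb{R}^d}|\xi|^2\rho(\xi)\,\mathrm{d}\xi$ because $\lambda_j=\|\nabla\phi_j\|^2$. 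Minimizing $\int|\xi|^2\rho$ over all $\rho$ subject to these constraints is a bathtub-principle problem whose minimizer is the indicator of a centered ball $B_R$ with $(2\pi)^{-d}\mathrm{Vol}_d(\Omega)\cdot|B_R|=k$; evaluating $\int_{B_R}|\xi|^2\,\mathrm{d}\xi$ and simplifying with the identity $(2\pi)^{-d}|B_1|=C_d$ produces exactly the asserted constant \cite{berezin1972convex,li1983schrodinger}. Since the eigenvalues are nondecreasing, $\lambda_k\ge\frac1k\sum_{j=1}^k\lambda_j$, and the pointwise inequality in part (2) follows at once.

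For part (3) --- the one genuine obstacle --- I would use a Rellich--Pohozaev identity. Fix a smooth vector field $F$ on $\overline{\Omega}$ with $F\cdot\bm{n}\equiv 1$ on $\varGamma_1$ and $F\cdot\bm{n}\equiv 0$ on $\varGamma_2$ (possible since $\partial\Omega$ is smooth and $\varGamma_1,\varGamma_2$ are disjoint). Multiplying $-\Delta\phi_k=\lambda_k\phi_k$ by $2\,F\cdot\nabla\phi_k$, integrating by parts, and using that $\nabla\phi_k=(\partial_{\bm{n}}\phi_k)\bm{n}$ on $\partial\Omega$ together with $\|\phi_k\|^2=1$ and $\|\nabla\phi_k\|^2=\lambda_k$ yields
\begin{equation*}
\|\partial_{\bm{n}}\phi_k\|_{L^2(\varGamma_1)}^2 = 2\int_\Omega (DF\,\nabla\phi_k)\cdot\nabla\phi_k\,\mathrm{d}x - \int_\Omega(\operatorname{div}F)\,|\nabla\phi_k|^2\,\mathrm{d}x + \lambda_k\int_\Omega(\operatorname{div}F)\,\phi_k^2\,\mathrm{d}x,
\end{equation*}
where $DF$ is the Jacobian of $F$. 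Each bulk term is bounded in absolute value by a constant multiple of $\|\nabla\phi_k\|^2=\lambda_k$ or of $\lambda_k\|\phi_k\|^2=\lambda_k$, so the right-hand side is $\le C\lambda_k$ and the upper bound is immediate. The hard direction is the matching lower bound $c\lambda_k\le\|\partial_{\bm{n}}\phi_k\|_{L^2(\varGamma_1)}^2$: the bulk terms carry no definite sign and can cancel, so the identity alone yields nothing from below. This is precisely the content of Hassell--Tao \cite{hassell2002upper}, whose lower bound rests on a more global (positive-commutator / non-concentration) argument ensuring that an eigenfunction cannot have all its normal-derivative mass vanish on $\varGamma_1$; I would invoke their theorem rather than attempt to extract the lower bound from the single identity above, which also explains why part (3) is stated with abstract constants $c,C$.
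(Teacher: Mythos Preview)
Your sketches are the standard, correct arguments for each of the three items, but there is nothing to compare against: the paper provides no proof of this proposition whatsoever. It is stated purely as a collection of cited classical facts (Weyl \cite{weyl1912asymptotische}, Berezin--Li--Yau \cite{berezin1972convex,li1983schrodinger}, Hassell--Tao \cite{hassell2002upper}) and the text immediately moves on to the lifting construction. So you have supplied precisely what the authors chose to omit, and your heat-trace/Tauberian route for (1), the bathtub argument for (2), and the Rellich--Pohozaev identity for the upper half of (3) are all exactly the textbook proofs behind those citations.

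One caveat worth recording on part (3). Your choice of vector field with $F\cdot\bm{n}=1$ on $\varGamma_1$ and $F\cdot\bm{n}=0$ on $\varGamma_2$ correctly localizes the boundary term to $\varGamma_1$ and yields the upper bound cleanly. For the lower bound you defer to Hassell--Tao, but their theorem is for the \emph{entire} boundary $\partial\Omega$; the proposition as stated asks for the lower bound on the proper portion $\varGamma_1$ alone. That is a genuinely stronger assertion --- it says an eigenfunction cannot push all of its normal-derivative mass onto $\varGamma_2$ --- and in general it requires an additional hypothesis (a geometric control condition on $\varGamma_1$, or real-analyticity of $\partial\Omega$, etc.). You rightly flagged the lower bound as the real obstacle; the restriction from $\partial\Omega$ to $\varGamma_1$ makes it more of one, and neither your appeal to \cite{hassell2002upper} nor the paper's bare citation actually covers that gap.
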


Following \cite{munteanu2017stabilisation}, we employ a lifting technique to handle boundary control. Define the Dirichlet maps $\mathscr{D}_i \in \mathcal{L}(L^2(\varGamma_1), H^{\frac{1}{2}}(\Omega))$ for $i = 1, \dots, N$ by $\mathscr{D}_i \alpha = w_i$, where $w_i$ is the unique solution to:
\begin{equation}\label{dirichlet}
\begin{cases}
\Delta w_i +2\sum\limits_{i=1}^N\lambda_i\langle \omega_i,\phi_i \rangle\phi_i - k_i w_i = 0 & \text{in } \Omega, \\
w_i = \alpha & \text{on } \varGamma_1,\\
w_i = 0 & \text{on } \varGamma_2.
\end{cases}
\end{equation}
Here, the parameters $k_i$ are chosen as 
\begin{equation}\label{ki}
    k_i = \lambda_i - \|\partial_{\bm{n}} \phi_i\|_{L^2(\varGamma_1)}\lambda_N^{-3/4}, \;\; i = 1, 2, \dots, N.  
\end{equation}
We now proceed to transform the original boundary control system into an equivalent distributed control system with homogeneous boundary conditions. To this end, we introduce the following change of variables:
\begin{equation}\label{dynamic-extension}
p(x,t) = z(x,t) - \sum_{i=1}^{N} \mathscr{D}_i U_i.
\end{equation}
By substituting \eqref{dynamic-extension} into \eqref{old}, we obtain
\begin{equation}\label{new} 
\begin{cases}
\partial_t p(x,t) = \Delta p(x,t) + f\left(p + \sum\limits_{i=1}^{N} \mathscr{D}_i U_i\right) - \sum\limits_{i=1}^{N} \big[\mathscr{D}_i (\dot{U}_i-k_iU_i)-2\lambda_i\langle \mathscr{D}_iU_i,\phi_i\rangle\phi_i\big],\\
\hspace{11.2cm}x\in\Omega, \\
p(x,t) = 0, \; x\in\partial\Omega, \\
p(x,0) = p^0(x), \; x \in \Omega.
\end{cases}
\end{equation}
 \subsection{Observer Design}     
Utilizing these spectral properties, we can explicitly quantify the relationship between the geometry of the domain and the decay rate of the estimation error. This allows us to design a state observer by properly configuring the measurement domains. Accordingly, we propose the following observer for system \eqref{new}:
\begin{equation}\label{observer-1}
\begin{cases}
\partial_t\hat{p}(x,t) = \Delta \hat{p}(x,t) + f\left(\hat{p}+ \sum\limits_{i=1}^{N} \mathscr{D}_i U_i\right) - \sum\limits_{i=1}^{N} \big[\mathscr{D}_i (\dot{U}_i-k_iU_i)-2\lambda_i\langle \mathscr{D}_iU_i,\phi_i\rangle\phi_i\big],\\
\hspace{11.2cm}x\in\Omega, \\
\hat{p}(x,t) = 0,\;  x\in\partial \Omega, \\
\hat{p}(x,t) = z(x,t)-\sum\limits_{i=1}^{N} \mathscr{D}_i U_i, \;x \in \bigcup\limits_{1 \le i < j  \le M} \varGamma_{ij}, \\
\hat{p}(x,0) = \hat{p}^0(x), \; x\in\Omega,
\end{cases}
\end{equation}
where  $\hat{p}^0(\cdot)\in L^2(\Omega)$ is arbitrarily given. The observer error is defined by $$\epsilon(x,t) = p(x,t) - \hat{p}(x,t),$$ which satisfies:
	\begin{equation}\label{error-1}
	\begin{cases}
	\partial_t \epsilon(x,t) = \Delta \epsilon(x,t) + f_p, & t>0,\;x\in\Omega, \\
	\epsilon(x,t) = 0, & t>0,\;x\in\varGamma, \\ 
	\epsilon(x,t) = 0, & t>0,\;x\in\bigcup\limits_{1 \leq i < j \leq M} \varGamma_{ij}, \\
	\epsilon(x,0) = p^0(x)-\hat{p}^0(x),&x\in\Omega,
	\end{cases}
	\end{equation}
	where 
	$f_p = f\big(\hat{p} + \sum_{i=1}^{N} \mathscr{D}_i U_i + \epsilon\big) - f\big(\hat{p} + \sum_{i=1}^{N} \mathscr{D}_i U_i\big)$
	remains Lipschitz continuous with respect to $\epsilon(x,t)$. To precisely characterize the error dynamics, we define piecewise error functions over the domain decomposition:
	\begin{equation*}
	\epsilon(x,t) = 
	\begin{cases}
	\epsilon_i(x,t), & t>0,\;x\in\Omega_i, \quad i = 1,\dots,M, \\
	0, & t>0,\;x\in\varGamma_{ij}, \quad 1 \leq i < j \leq M,
	\end{cases}
	\end{equation*}
where  $\{\epsilon_i\}_{i=1}^M$ satisfies the following PDE systems:
	\begin{equation}\label{e12-1}
	\begin{cases}
	\partial_t \epsilon_i(x,t) = \Delta \epsilon_i(x,t) + f_p, & t>0,\;x\in\Omega_i, \\
	\epsilon_i(x,t) = 0, & t>0,\;x\in\partial\Omega_i,\\
     \epsilon_i(x,0)=p^0(x)-\hat{p}^0(x),&x\in \Omega_i.
	\end{cases}
	\end{equation}
	The existence and uniqueness of classical solutions to the error system \eqref{e12-1} can be established by applying \cite[Theorem 6.3.3]{pazy2012semigroups}.
	 Consider the Sturm-Liouville operator 
	\begin{equation*}
	\begin{split}
	&\mathscr{A}_i:\mathscr{D}(\mathscr{A})\to L^2(\Omega_i),\quad i=1,2,\ldots,M,\\
	&\mathscr{A}_i h = -\Delta h,
	\end{split}
	\end{equation*}
	defined on the domain $\mathscr{D}(\mathscr{A}_i) = \{h(x)\in H^2(\Omega_i) \mid h(x)=0 \text{ for } x\in \partial\Omega_i\}$. Analogous to \eqref{eigenfunction}, each operator $\mathscr{A}$ ($i = 1, 2,\ldots,K$) possesses a  set of eigenvalues 
	\begin{equation*}
	0 < \lambda_{i, 1} < \lambda_{i, 2} \leq \lambda_{i, 3} \leq \cdots \leq \lambda_{i, n} \leq \cdots \to +\infty,
	\end{equation*}
	with corresponding eigenfunctions $\phi_{i,n}$ ($n=1,2,\ldots$) that satisfy the equation:
	\begin{equation*}
	\begin{cases}
	\Delta \phi_{i, n} = -\lambda_{i, n} \phi_{i, n}, & x \in \Omega_i, \\ 
	\phi_{i, n} = 0, & x \in \partial \Omega_i.
	\end{cases}
	\end{equation*}
    
 We have the following result on the stability of the error system \eqref{error-1}.

\begin{theorem}\label{errorthm}
Consider the error system \eqref{error-1} with the system measurement \eqref{h1} satisfying \eqref{condition6}. There exist positive constants $\kappa_1$ and $\kappa_2$ such that the exponential stability estimate holds:
	\begin{align}\label{expon-est}
	\|\epsilon(\cdot,t)\| \leq \kappa_1e^{-\kappa_2 t}\|p^0(\cdot)-\hat{p}^0(\cdot)\|.
	\end{align}
\end{theorem}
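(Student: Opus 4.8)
The plan is to exploit the interface Dirichlet conditions to decouple the error dynamics into $M$ independent homogeneous Dirichlet problems, and then to run a Lyapunov energy argument on each subdomain whose decay rate is controlled by the first Dirichlet eigenvalue $\lambda_{i,1}$. First I would observe that since $\epsilon$ vanishes both on the outer boundary $\varGamma$ and on the union of interfaces $\bigcup_{i<j}\varGamma_{ij}$, and since $\partial\Omega_i$ is contained in $\varGamma \cup \bigcup_{i<j}\varGamma_{ij}$, each restriction $\epsilon_i = \epsilon|_{\Omega_i}$ solves the homogeneous Dirichlet problem \eqref{e12-1} on $\Omega_i$ independently of the others. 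Crucially, on $\Omega_i$ the forcing is $f_p = f\big(\hat p + \sum_j \mathscr{D}_j U_j + \epsilon_i\big) - f\big(\hat p + \sum_j \mathscr{D}_j U_j\big)$, so the global Lipschitz condition yields the pointwise bound $|f_p(x,t)| \le L\,|\epsilon_i(x,t)|$ for $x \in \Omega_i$.

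Next I would introduce the Lyapunov functional $V_i(t) = \tfrac12 \|\epsilon_i(\cdot,t)\|_{L^2(\Omega_i)}^2$ and differentiate it along \eqref{e12-1}. Integration by parts using the zero boundary data gives $\langle \epsilon_i, \Delta\epsilon_i\rangle = -\|\nabla\epsilon_i\|^2$, and the variational (Rayleigh-quotient) characterization of the smallest Dirichlet eigenvalue supplies $\|\nabla\epsilon_i\|^2 \ge \lambda_{i,1}\|\epsilon_i\|^2$. The nonlinear term is controlled via Cauchy--Schwarz together with the pointwise Lipschitz estimate, $\langle \epsilon_i, f_p\rangle \le L\|\epsilon_i\|^2$. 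Combining these produces the scalar differential inequality $\dot V_i \le -2(\lambda_{i,1}-L)V_i$, whence $\|\epsilon_i(\cdot,t)\| \le e^{-(\lambda_{i,1}-L)t}\|\epsilon_i(\cdot,0)\|$.

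The decisive step is to certify that $\lambda_{i,1} > L$ for \emph{every} subdomain, which is exactly where the spectral geometry enters. Applying the Berezin--Li--Yau inequality of Proposition~\ref{pro-1}(2) on $\Omega_i$ with $k=1$ gives $\lambda_{i,1} \ge \frac{d}{d+2}\big(C_d\,\mathrm{Vol}_d(\Omega_i)\big)^{-2/d}$, and a direct manipulation shows that the volume hypothesis \eqref{condition6} is precisely equivalent to this lower bound exceeding $L$. Hence $\lambda_{i,1} - L > 0$ for all $i = 1,\dots,M$, and I would set $\kappa_2 := \min_{1\le i\le M}(\lambda_{i,1}-L) > 0$.

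Finally, I would assemble the global estimate from the subdomain bounds using additivity of the $L^2$ norm over the partition, namely $\|\epsilon(\cdot,t)\|^2 = \sum_{i=1}^M \|\epsilon_i(\cdot,t)\|^2 \le e^{-2\kappa_2 t}\sum_{i=1}^M\|\epsilon_i(\cdot,0)\|^2 = e^{-2\kappa_2 t}\|p^0-\hat p^0\|^2$, which yields \eqref{expon-est} with $\kappa_1 = 1$. I expect the main obstacle to be the uniform-over-subdomains positivity $\lambda_{i,1} > L$: the energy inequality on its own only produces decay when $L$ is dominated by the first eigenvalue, and the entire point of condition \eqref{condition6} — made quantitative through Berezin--Li--Yau rather than through a merely qualitative Poincaré argument — is to force every subdomain to be small enough that this holds no matter how large the Lipschitz constant $L$ is, which is what makes the design robust for arbitrary $L$.
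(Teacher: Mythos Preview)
Your proposal is correct and follows the same overall architecture as the paper's proof: decouple the error into independent Dirichlet problems on the subdomains $\Omega_i$, derive an exponential decay estimate on each with rate $\lambda_{i,1}-L$, and invoke the Berezin--Li--Yau inequality together with the volume condition \eqref{condition6} to force $\lambda_{i,1}>L$ uniformly in $i$.

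The one substantive difference is in how the subdomain decay is obtained. The paper works with the $C_0$-semigroup $T_i(t)$ generated by $-\mathscr{A}_i$, writes the mild solution via Duhamel's formula, and applies Gronwall's inequality to the resulting integral inequality; you instead run a direct Lyapunov energy computation, using integration by parts and the Rayleigh-quotient characterization of $\lambda_{i,1}$ (i.e., the Poincar\'e inequality on $\Omega_i$) to arrive at $\dot V_i\le -2(\lambda_{i,1}-L)V_i$. Both routes land on the same estimate $\|\epsilon_i(\cdot,t)\|\le e^{-(\lambda_{i,1}-L)t}\|\epsilon_i(\cdot,0)\|$. Your argument is slightly more elementary---it avoids semigroup machinery and requires only the variational principle for the first eigenvalue---and your final assembly via additivity of the $L^2$ norm over the partition is cleaner than the paper's, yielding the sharp constant $\kappa_1=1$ rather than a constant depending on $M$.
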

	\begin{proof} 
		 Let $T_i(t),\;i=1,2,\dots,M$ be a $C_0$ semigroup and let $-\mathscr{A}_i$ be its infinitesimal generator, which means 
		\begin{equation}\label{suanzi}
		\|T_i(t)\|_{\mathscr{L}(L^2(\Omega_i))}\leq e^{-\lambda_{i,1}t}.
		\end{equation}
		Then, solving \eqref{e12-1} yields
		$$
		\epsilon_i(t)=T_i(t)(p^0-\hat{p}^0)+\int_{0}^{t}T_i(t-\tau)f_p(\tau)d\tau.
		$$
		Thus, we immediately obtain
		\begin{equation*}
		\begin{split}
		\|\epsilon_i\|&\leq e^{-\lambda_{i,1}t}(\|z^0-\hat{z}^0\|+\int_{0}^{t}\|T_i(-\tau)\|\|f_z(\tau)\|d\tau)\\
		&\leq e^{-\lambda_{i,1}t}(\|z^0-\hat{z}^0\|+L\int_{0}^{t}\|T_i(-\tau)\|\|\epsilon_i(\tau)\|d\tau),
		\end{split}
		\end{equation*}
		where the Lipschitz property of $f_p(\cdot)$ has been used. Then, it can be deduced by the Gronwall's inequality that
		\begin{equation}\label{est-3}
		\|\epsilon_i\|\leq \|p^0-\hat{p}^0\|e^{(-\lambda_{i,1}+L)t},\;i=1,2,...,M.
		\end{equation}
		Proposition \ref{pro-1} implies that 
        \begin{equation*} 
        \begin{split} 
        \max_{1 \leq i \leq M}\{-\lambda_{i,1}\} = -\min_{1 \leq i \leq M}\{\lambda_{i,1}\} \leq - 2\pi\bigg(\max_{1 \leq i \leq M} \operatorname{Vol}(\Omega_i)\bigg)^{-1}, 
        \end{split} 
        \end{equation*} which, combined with \eqref{condition6}, leads to 
        \begin{equation}\label{test} 
        L + \max_{1 \leq i \leq M}\{-\lambda_{i,1}\} < 0. 
        \end{equation}
        Recall \eqref{est-3}, we can conclude that $\epsilon_i,\;i=1,2,...,M$ is exponentially stable.
		  The proof is completed.\end{proof}
   \begin{remark}
    Invoking the Berezin-Li-Yau inequality, we demonstrate that the measurement can be chosen such that $L+\max_{1 \leq i \leq M}\{-\lambda_{i,1}\}$ becomes arbitrarily negative number. This implies that the error system \eqref{e12-1} realizes quantitative and rapid stabilization. Further details regarding quantitative rapid stabilization can be found in \cite{xiang2024}.
    \end{remark}
    \subsection{Sensor Placement Strategy on Rectangular Domains}\label{sec2.3}
Consider a nonlinear parabolic system defined on the rectangular domain $\Omega = (0, 1) \times (0, 1)$ with  $\varGamma_1 = \{0\} \times (0, 1)$ representing the controlled boundary. The system dynamics are governed by the parabolic PDE with associated boundary and initial conditions.
The measurement of the above system  is given by:
\begin{equation}\label{measurement}
h(x,t) =  \sum_{i=1}^{M_1}\sum_{j=1}^{M_2} \left(\chi_{\varGamma_{i,j}^H} + \chi_{\varGamma_{i,j}^V}\right) z(x,t),
\end{equation}
in which   (see figure \ref{fig:domain_decomp1}) consist of horizontal and vertical line segments defined as:
\begin{equation*}
    \begin{split}
        &\varGamma_{i,j}^H = (b_{i-1},b_i)\times\{a_j\}, \quad i=1,\dots,M_1+1,\\
        &\varGamma_{i,j}^V = \{b_i\}\times(a_{j-1},a_j), \quad j=1,\dots,M_2+1,
    \end{split}
\end{equation*}
with partition points $0=a_0 < a_1 < \cdots < a_{M_1} < a_{M_1+1}=1$ and $0=b_0 < b_1 < \cdots < b_{M_2} < b_{M_2+1}=1$. This partition induces a decomposition of $\Omega$ into rectangular subdomains 
\begin{equation*}
    \Omega_{i,j} = (b_{i-1}, b_i) \times (a_{j-1}, a_j), \;i=1,\dots,M_1+1,\;j=1,\dots,M_2+1.
\end{equation*}
The proposed observer system is given as in \eqref{observer-1}. 
\begin{figure}[htbp] 
    \centering
    \includegraphics[width=0.6\textwidth]{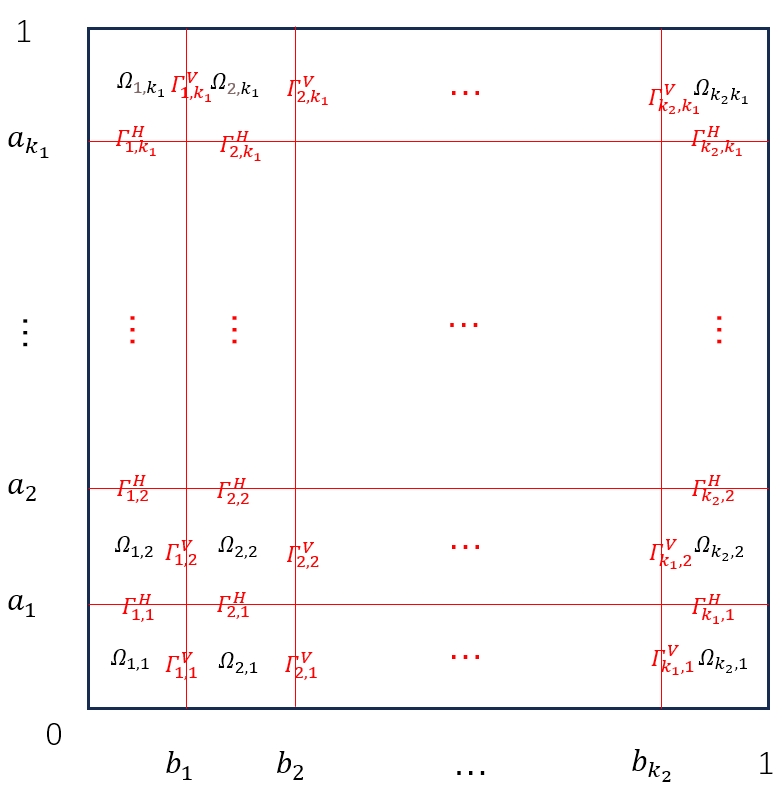}
    \caption{Domain decomposition}
    \label{fig:domain_decomp1}
\end{figure}
The estimation errors $\epsilon(x,y,t) = p(x,y,t) - \hat{p}(x,y,t)$ exhibit the following piecewise behavior:
\begin{equation*}\label{error_z}
\epsilon(x,y,t) = 
\begin{cases} 
\epsilon_{ij}(x,y,t), & (t,x,y)\in (0,\infty)\times\Omega_{i.j}, \\
0, & (t,x,y)\in (0,\infty)\times\bigcup_{i,j}(\varGamma_{i,j}^H \cup \varGamma_{i,j}^V).
\end{cases}
\end{equation*}
The error components $\epsilon_{ij}(x,y,t)$  satisfy the following parabolic PDE systems:
             \begin{align*}
			\begin{cases}
			\partial_t \epsilon_{ij}(x,y,t) = \Delta \epsilon_{ij}(x,y,t) + f_p(\epsilon_{ij}(x,y,t)), & (x,y) \in \Omega_{ij}, \\
			\epsilon_{ij}(x,y,t) = 0, & (x,y) \in   \partial\Omega_{ij}.
			\end{cases}
			\end{align*}
		Consider  the operator $\mathscr{A}_{ij}$, which is defined as
		\begin{equation*}
		\begin{cases}
		\mathscr{A}_{ij}y=-\Delta y, \;\forall y \in \mathscr{D}(\mathscr{A_{ij}}),\\
		\mathscr{D}(\mathscr{A_{ij}})=\{y\in H^2(\Omega_{ij})|\ y=0\;\mbox{on}\; \partial\Omega_{ij}  \}.
		\end{cases}
		\end{equation*}
		By direct computation, the principal eigenvalues of the operator $\mathscr{A}_{ij}$ are given by $$\lambda_{ij,1}=\pi^2 [ (b_i-b_{i-1})^{-2}+(a_j-a_{j-1})^{-2} ].$$
		To ensure \eqref{test} holds, it suffices to choose suitable partition  to guarantee
		\begin{equation}\label{condition7}
		L< \pi^2\bigg(\min_{1\leq i \leq M_1}\{(b_i-b_{i-1})^{-2}\}+\min_{1\leq j \leq M_2}\big\{(a_j-a_{j-1})^{-2}\big\}\bigg).
		\end{equation}
This inequality reveals that for any prescribed Lipschitz constant $L$, \eqref{condition7} can be satisfied by selecting sufficiently large integers $M_1$ or $M_2$ in \eqref{measurement}. 

 To achieve efficient sensor placement, we further  characterize the partition strategy. For any given  $L$, to minimize $M_1$ and $M_2$, we need to find  partition strategies to maximize $\min_{1\leq i\leq M_1+1}(b_i-b_{i-1})^{-2}$ and $\min_{1\leq j\leq M_2+1}(a_j-a_{j-1})^{-2}$ for the horizontal and vertical divisions, respectively.
 Applying Jensen's inequality, we obtain
\begin{align*}
 \min_{1\leq i \leq M_1} \frac{1}{(b_i - b_{i-1})^2} 
    &\leq \frac{1}{M_1+1}\sum_{i=1}^{M_1+1} (b_i - b_{i-1})^{-2} \leq (M_1+1)^2,\\
    \min_{1\leq j \leq M_2} \frac{1}{(a_j - a_{j-1})^2}
    &\leq \frac{1}{M_2+1}\sum_{j=1}^{M_2+1} (a_j - a_{j-1})^{-2} \leq (M_2+1)^2.\\
\end{align*}
The equality is achieved if and only if the partitions are equidistant, specifically:
\begin{itemize}
\item For $M_1$ vertical divisions: $a_j = \frac{j}{M_1+1}$; 
\item For $M_2$ horizontal divisions: $b_i = \frac{i}{M_2+1}$. 
\end{itemize}
It follows the preceding analysis, for any given $L$, by (\ref{condition7}),  we have $$L < \pi^2(M_1 + 1)^2 + \pi^2(M_2 + 1)^2,$$
under equidistant partition. 
To minimizes the total number of lines equipped with sensors $M$ ($M=M_1+M_2$) satisfying the above inequality, we need to maximize $(M_1 + 1)^2 + (M_2 + 1)^2$. It can be calculated from Lagrange multiplier method ‌that the maximum value is achieved when either all sensors are placed vertically: $(M_1,M_2) = (M,0)$ or all sensors are placed horizontally: $(M_1,M_2) = (0,M)$.
\section{Controller Design}\label{sec3}
This section develops the boundary  control law in three steps. Section~\ref{sec3.1} introduces the preliminary knowledge. Based on Section~\ref{sec3.1}, Section~\ref{sec3.2} constructs a state feedback control law assuming full state availability. Finally, Section~\ref{sec3.3} integrates this control law with the observer designed in Section ~\ref{sec2} to obtain the output feedback control.
    \subsection{Preliminaries} \label{sec3.1}
Hereafter, for any function $h(\cdot) \in L^2(\Omega)$, we denote $h_n = \langle h(\cdot), \phi_n(\cdot) \rangle$. The solution to the first equation of system \eqref{new} can be expanded in the eigenfunction basis as:
\begin{equation*}
p(x,t) = \sum_{n=1}^{\infty} p_n(t) \phi_n(x), \quad \text{where} \quad p_n(t) := \langle p(\cdot,t), \phi_n(\cdot) \rangle.
\end{equation*}
Differentiating both sides of $p_n(t) = \langle p(\cdot,t), \;\phi_n(\cdot) \rangle,\;n \in \mathbb{N}^+$ with respect to time $t$ yields
\begin{equation}\label{infinite}
\dot{p}_n = -\lambda_n p_n + f_n(z) - \sum_{i=1}^{N} \langle \mathscr{D}_i (\dot{U}_i-k_iU_i), \phi_n \rangle-2\sum\limits_{i=1}^N\lambda_i \langle \mathscr{D}_iU_i,\phi_i\rangle\delta_{in}.
\end{equation}
An application of Green's formula yields
\begin{equation}\label{ys}
\langle \mathscr{D}_iU_i, \phi_n \rangle = -\frac{\big\langle U_i, \partial_{\bm{n}} \phi_n \big\rangle_{L^2(\varGamma_1)}}{k_i + \lambda_n-2\lambda_n\sum_{l=1}^N\delta_{ln}}.
\end{equation}
We define
\begin{equation}\label{Ui}
    U_i = \langle \bm{u}, \bm{T}_i \rangle_{\mathbb{R}^N},
\end{equation}
where $\bm{u}$ is given by \eqref{U} (state feedback) or \eqref{observer-based control} (output feedback), and $\bm{T}_i = [ -\frac{\partial_{\bm{n}} \phi_1}{k_i-\lambda_1}, \dots, -\frac{\partial_{\bm{n}} \phi_N}{k_i-\lambda_N} ]^{\top}$.
Let $\bm{p}_s = [p_1, \dots, p_N]^{\top}$, $\bm{f}_s = [f_1, \dots, f_N]^{\top}$ and $\bm{A}_s = \operatorname{diag}\{-\lambda_1, \dots, -\lambda_N\}$. Direct computation leads to
\begin{equation}\label{ps}
    \dot{\bm{p}}_s = \bm{A}_s\bm{p}_s + \bm{f}_s - \bm{B}\dot{\bm{u}} - \bm{C}\bm{u},
\end{equation}
where the entries of $\bm{B}, \bm{C} \in \mathbb{R}^{N \times N}$ are given by
\begin{equation}\label{a}
\bm{B}_{jn} \!=\! \sum_{i=1}^N\frac{\langle \partial_{\bm{n}} \phi_j,\partial_{\bm{n}} \phi_n \rangle}{(k_i\!-\!\lambda_j)(k_i\!-\!\lambda_n)}, ~ \bm{C}_{jn}\!=\!\sum_{i=1}^N\frac{k_i\langle \partial_{\bm{n}} \phi_j,\partial_{\bm{n}} \phi_n \rangle}{(k_i\!-\!\lambda_j)(k_i\!-\!\lambda_n)}.
\end{equation}
Analogous to \cite[Lemma A.1]{lhachemi2025boundary}, the matrix $m\bm{B}-\bm{C}$ is invertible.
\subsection{State Feedback Control} \label{sec3.2}
We now focus on stabilizing the nonlinear system \eqref{old}. To this end, we propose a linear state feedback control law for the finite-dimensional subsystem \eqref{ps}. The control vector $\bm{u}$ is defined by:
\begin{equation}\label{U}
    \bm{u} = \bm{K}\bm{p}_s,
\end{equation}
with the gain matrix $\bm{K} = -(m\bm{B}-\bm{C})^{-1}(m\bm{I}+\bm{A}_s)$. Here, $m$ is a tuning parameter, the specific selection of which will be determined in the subsequent stability analysis. 
Substituting the control law \eqref{U} into \eqref{ps} yields 
\begin{equation}\label{lin-p}
\begin{cases}
\dot{\bm{p}}_{s} = -m \bm{p}_{s} - (\bm{I} + \bm{B} \bm{K})^{-1} \bm{f}_s,\\
\dot p_n
= -\lambda_n p_n + f_n \\
\hspace{0.9cm}+\sum_{i=1}^N \langle \mathscr{D}_i \big(\langle (k_i+m)\bm{K}\bm{p}_s +\! \bm{K}(\bm{I}+\bm{B}\bm{K})^{-1}\bm{f}_s, \bm{T}_i\big\rangle_{\mathbb{R}^N} \big), \phi_n\rangle,\; n\!>\!N. 
\end{cases}
\end{equation}
Based on \eqref{lin-p}, it allows us to reformulate the closed-loop system as an abstract evolution equation. The resulting dynamics are governed by:
\begin{equation}\label{evolution}
\frac{dp}{dt}= \tilde{\mathcal{A}}p + \mathcal{G}(p),
\end{equation}
where the linear operator $\tilde{\mathcal{A}}$ and the nonlinear operator $\mathcal{G}$ are defined as: 
\begin{equation}\label{ag}
    \begin{split}
        &\tilde{\mathcal{A}} p = \Delta p+ \sum_{i=1}^{N} [\mathscr{D}_i ( (k_i + m) \bm{T}_i^\top\bm{K}\bm{p}_s )-2\lambda_i\langle \mathscr{D}_i\bm{T}_i^\top\bm{K}\bm{p}_s,\phi_i\rangle\phi_i],\\
        &\mathcal{G}(p) = f(z) - \sum_{i=1}^{N} \mathscr{D}_i (\bm{T}_i^\top \bm{K} (\bm{I}+\bm{B} \bm{K})^{-1} \bm{f}_s).
    \end{split}
\end{equation}
Then, we have the following Theorem.
\begin{theorem}\label{stability-thm}
Consider the linear system \eqref{lin-p} with $f \equiv 0$ under the control law $U_i$ defined in \eqref{Ui}. For any prescribed tuning parameter $m > 1/2$, if  $N \in \mathbb{N}^+$ is chosen sufficiently large such that the following stability condition is satisfied:
\begin{equation}\label{condition-1}
m < \lambda_{N+1} - \frac{\|\bm{K}\|_2^2}{2} \sum_{i,j=1}^N \frac{\zeta_{i,j}(m+k_i)^2}{(k_i-\lambda_j)^2},
\end{equation}
where $\zeta_{i,j} = \sum_{n=N+1}^{\infty} \langle \mathscr{D}_i(\partial_{\bm{n}}\phi_j), \phi_n \rangle^2$, then for any initial condition $p^0 \in L^2(\Omega)$, the state satisfies the exponential decay estimate:
\begin{equation}\label{decay_est}
\|p(\cdot,t)\|^2 \leq e^{-(2m-1) t} \|p^0\|^2, \quad \forall t \ge 0.
\end{equation}
In particular, for spatial dimensions $d\in\{1,2,3\}$, condition \eqref{condition-1} can always be satisfied.
\end{theorem}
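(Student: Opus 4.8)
The plan is to take the natural Lyapunov functional $V(t)=\|p(\cdot,t)\|^2$, which by Parseval's identity splits as $V=\|\bm{p}_s\|_2^2+\sum_{n>N}p_n^2$, and to prove the differential inequality $\dot V\le-(2m-1)V$; the estimate \eqref{decay_est} then follows directly from Gronwall's inequality, and the hypothesis $m>1/2$ is exactly what renders the rate $2m-1$ positive. With $f\equiv0$, the finite block of \eqref{lin-p} reduces to $\dot{\bm{p}}_s=-m\bm{p}_s$, so $\tfrac{d}{dt}\|\bm{p}_s\|_2^2=-2m\|\bm{p}_s\|_2^2$, whereas each tail mode satisfies $\dot p_n=-\lambda_n p_n+F_n$ with coupling $F_n=\sum_{i=1}^N(m+k_i)\langle\mathscr{D}_i(\bm{T}_i^\top\bm{K}\bm{p}_s),\phi_n\rangle$. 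Differentiating the tail energy gives
\begin{equation*}
\frac{d}{dt}\sum_{n>N}p_n^2=-2\sum_{n>N}\lambda_n p_n^2+2\sum_{n>N}p_nF_n,
\end{equation*}
so the entire difficulty is reduced to controlling the head--tail coupling $2\sum_{n>N}p_nF_n$.

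Next I would make the coupling explicit. Using $\bm{T}_i=[-\partial_{\bm{n}}\phi_1/(k_i-\lambda_1),\dots,-\partial_{\bm{n}}\phi_N/(k_i-\lambda_N)]^\top$ and the Dirichlet-map identity in the spirit of \eqref{ys}, one writes $F_n$ as a finite combination of the terms $(\bm{K}\bm{p}_s)_j\,(m+k_i)(k_i-\lambda_j)^{-1}\langle\mathscr{D}_i(\partial_{\bm{n}}\phi_j),\phi_n\rangle$. A Cauchy--Schwarz estimate in the tail index $n$ produces precisely the quantities $\zeta_{i,j}=\sum_{n>N}\langle\mathscr{D}_i(\partial_{\bm{n}}\phi_j),\phi_n\rangle^2$, and a further Cauchy--Schwarz over the finite indices $i,j$ together with $\|\bm{K}\bm{p}_s\|_2\le\|\bm{K}\|_2\|\bm{p}_s\|_2$ yields, after a weighted Young's inequality, a bound of the form $2\sum_{n>N}p_nF_n\le\|\bm{p}_s\|_2^2+\|\bm{K}\|_2^2\big(\sum_{i,j}\tfrac{(m+k_i)^2}{(k_i-\lambda_j)^2}\zeta_{i,j}\big)\sum_{n>N}p_n^2$, where the $\|\bm{p}_s\|_2^2$ part is absorbed into $-2m\|\bm{p}_s\|_2^2$ to leave $-(2m-1)\|\bm{p}_s\|_2^2$. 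Invoking the spectral gap $\sum_{n>N}\lambda_np_n^2\ge\lambda_{N+1}\sum_{n>N}p_n^2$ and collecting all contributions, one finds that $\dot V+(2m-1)V\le0$ holds once the negative tail coefficient dominates, which is guaranteed by the stated condition \eqref{condition-1}.

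It remains to show that for $d\in\{1,2,3\}$ condition \eqref{condition-1} is satisfiable for every $L$ by taking $N$ large. Weyl's law (Proposition \ref{pro-1}(1)) gives $\lambda_{N+1}\asymp N^{2/d}$, the Rellich inequality (Proposition \ref{pro-1}(3)) gives $\|\partial_{\bm{n}}\phi_k\|_{L^2(\varGamma_1)}^2\asymp\lambda_k$, and the choice \eqref{ki} makes the diagonal denominators explicit, $k_i-\lambda_i=-\|\partial_{\bm{n}}\phi_i\|_{L^2(\varGamma_1)}\lambda_N^{-3/4}$, so that $(k_i-\lambda_i)^{-2}\asymp\lambda_i^{-1}\lambda_N^{3/2}$ governs the dominant, diagonal part of the double sum in \eqref{condition-1} (the off-diagonal entries $k_i-\lambda_j$ staying of order $|\lambda_i-\lambda_j|$). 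Using the projection formula $\langle\mathscr{D}_i(\partial_{\bm{n}}\phi_j),\phi_n\rangle=-\langle\partial_{\bm{n}}\phi_j,\partial_{\bm{n}}\phi_n\rangle_{L^2(\varGamma_1)}/(k_i+\lambda_n)$ valid for $n>N$, I would estimate the tail sums $\zeta_{i,j}$ and the gain norm $\|\bm{K}\|_2$, and then verify that the whole coupling term $\tfrac{\|\bm{K}\|_2^2}{2}\sum_{i,j}\tfrac{(m+k_i)^2}{(k_i-\lambda_j)^2}\zeta_{i,j}$ grows strictly slower than $\lambda_{N+1}$ as $N\to\infty$, so that the right-hand side of \eqref{condition-1} eventually exceeds any fixed $m$.

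The crux---and the step I expect to be hardest---is this final asymptotic comparison of three competing $N$-dependent quantities: the spectral gap $\lambda_{N+1}\asymp N^{2/d}$, the gain $\|\bm{K}\|_2^2$, and the boundary tail sums $\zeta_{i,j}$. A naive Cauchy--Schwarz bound $\langle\partial_{\bm{n}}\phi_j,\partial_{\bm{n}}\phi_n\rangle^2\le\|\partial_{\bm{n}}\phi_j\|_{L^2(\varGamma_1)}^2\|\partial_{\bm{n}}\phi_n\|_{L^2(\varGamma_1)}^2\asymp\lambda_j\lambda_n$ collapses $\zeta_{i,j}$ into the series $\sum_{n>N}\lambda_n(k_i+\lambda_n)^{-2}\asymp\sum_{n>N}n^{-2/d}$, which converges only for $d=1$; for $d=2,3$ this is too lossy, and one must instead exploit a sharper trace/summability estimate for $\sum_n\langle\partial_{\bm{n}}\phi_j,\partial_{\bm{n}}\phi_n\rangle^2(k_i+\lambda_n)^{-2}$ to offset both the accumulation of the small diagonal denominators $(k_i-\lambda_i)^{-2}\asymp\lambda_N^{3/2}\lambda_i^{-1}$ and the growth of $\|\bm{K}\|_2^2$. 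It is precisely the requirement that $\lambda_{N+1}\asymp N^{2/d}$ outpace this coupling that forces the dimensional restriction $d\le3$.
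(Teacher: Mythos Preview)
Your Lyapunov argument for the decay estimate \eqref{decay_est} is essentially identical to the paper's: the same functional $V=\|p\|^2=\|\bm p_s\|_2^2+\sum_{n>N}p_n^2$, the same head--tail split, the same Young's inequality on the coupling producing a $\|\bm p_s\|_2^2$ term absorbed by $-2m\|\bm p_s\|_2^2$, and the spectral-gap bound $\sum_{n>N}\lambda_np_n^2\ge\lambda_{N+1}\sum_{n>N}p_n^2$, closing to $\dot V\le-(2m-1)V$ under \eqref{condition-1}.

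On feasibility your proposal and the paper part ways. You sketch the asymptotic ingredients but leave the actual bounds on $\|\bm K\|_2$ and $\zeta_{i,j}$ open, and you anticipate that a ``sharper trace/summability estimate'' beyond naive Cauchy--Schwarz is needed for $d=2,3$. The paper instead supplies three concrete bounds and combines them: (i) $\|\bm K\|_2\lesssim\lambda_N^{-3/2}$, obtained by writing out $\bm K^{-1}$ explicitly, checking it is diagonally dominant, and applying a singular-value bound due to Qi; (ii) $\sum_{i,j}\|\partial_{\bm n}\phi_j\|_{L^2(\varGamma_1)}^2/(k_i-\lambda_j)^2\lesssim N\lambda_N^{3/2}$, by separating the diagonal $i=j$ (whose contribution is exactly $N\lambda_N^{3/2}$ by the choice \eqref{ki}); and (iii) $\zeta_{i,j}/\|\partial_{\bm n}\phi_j\|_{L^2(\varGamma_1)}^2\lesssim N^{1-1.5/d}$, obtained precisely via the naive Cauchy--Schwarz you flag, followed by a Stolz--Ces\`aro comparison of the tail $\sum_{n>N}\|\partial_{\bm n}\phi_n\|_{L^2(\varGamma_1)}^2/(k_i+\lambda_n)^2$ against $N^{1-1.5/d}$. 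So what is missing from your plan, relative to the paper, is not a sharper trace estimate but the explicit diagonal-dominance computation for $\bm K^{-1}$ and the Stolz--Ces\`aro step. Your observation that the Cauchy--Schwarz upper bound behaves like $\sum_{n>N}n^{-2/d}$ and hence diverges for $d\ge2$ is correct, and that step of the paper's argument is indeed delicate as written; but that is the route the paper takes, not a refinement of it.
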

\begin{proof}
Consider the Lyapunov function
  \begin{equation*} 
  V(t) = \|p\|^2 = \bm{p}_s^{\top}\bm{p}_s + \sum_{n=N+1}^{\infty} p_n^2. 
  \end{equation*}
Differentiating $V(t)$ along  \eqref{lin-p} yields
    \begin{equation*}
        \begin{split}
            \dot{V}(t) = & -2m\bm{p}_s^{\top}\bm{p}_s+ \sum_{n=N+1}^{\infty} \bigg[ -2\lambda_n p_n^2 + 2 \bigg\langle \sum\limits_{i=1}^N \mathscr{D}_i \langle(m+k_i)\bm{K}\bm{p}_s, \bm{T}_i\rangle_{\mathbb{R}^N} , p_n\phi_n \bigg\rangle \bigg].
        \end{split}
    \end{equation*}
Applying Young's inequality, we  obtain
    \begin{equation*}
        \begin{aligned}
            & 2 \bigg\langle \sum\limits_{i=1}^N \mathscr{D}_i \langle(m+k_i)\bm{K}\bm{p}_s, \bm{T}_i\rangle_{\mathbb{R}^N} , \sum_{n=N+1}^{\infty}p_n\phi_n \bigg\rangle \\
            \leq&   \bm{p}_s^{\top}\bm{p}_s+ \|\bm{K}\|_2^2 \sum_{i,j=1}^N \frac{\zeta_{i,j}(m+k_i)^2}{(k_i-\lambda_j)^2}\sum_{n=N+1}^{\infty}p_n^2.
        \end{aligned}
    \end{equation*}
Finally, we arrive at
\begin{equation*}
\begin{split}
\dot{V}(t)\leq&  (-2m + 1 )\bm{p}_s^{\top}\bm{p}_s +  \bigg[ -2\lambda_{N+1}+  \|\bm{K}\|_2^2\sum_{i,j=1}^N \frac{\zeta_{i,j}(m+k_i)^2}{(k_i-\lambda_j)^2}   \bigg] \sum_{n=N+1}^{\infty}p_n^2.
\end{split}
\end{equation*}
With $m$ satisfying \eqref{condition-1}, it follows from Gronwall's inequality that $$V(t) \leq e^{-(2m-1)t}V(0).$$ This, together with the definition of $V(t)$, directly establishes \eqref{decay_est}.

We now show that \eqref{condition-1} always holds for sufficiently large $N$.
 This necessitates estimating the asymptotic orders of the terms $\|\bm{K}\|_2$, $\sum_{i,j=1}^N\|\partial_{\bm{n}}\phi_j\|_{L^2(\varGamma_1)}^2(k_i-\lambda_j)^{-2}$ and $\zeta_{i,j}\cdot\|\partial_{\bm{n}}\phi_j\|_{L^2(\varGamma_1)}^{-2}$.
 
 Firstly, we estimate  $\|\bm{K}\|_2$. Consider the $\bm{K}^{-1}$ defined by the elements:
    \begin{equation}\label{est-5}
        (\bm{K}^{-1})_{jn} = \frac{-1 }{m  -\lambda_j }\sum_{i=1}^N\frac{(m-k_i)\langle \partial_{\bm{n}} \phi_j,\partial_{\bm{n}} \phi_n \rangle}{(k_i-\lambda_j)(k_i-\lambda_n)}.
    \end{equation}
It can be analyzed that $\left| (\bm{K}^{-1})_{jj} \right| > \sum_{n \neq j}^N \left| (\bm{K}^{-1})_{jn} \right|$,
  which means $\bm{K}^{-1}$ is the diagonally dominant matrix.  According to the conclusion presented in \cite[Theorem 2]{qi1984some}, we arrive at
\begin{equation}\label{est-7}
\begin{split}
      \|\bm{K}^{-1}\|_2 &\leq \max\limits_{1\leq j \leq N} \left\{ \bigg|\frac{m  - k_j }{m-\lambda_j }\bigg|\right\}\lambda_N^{3/2}(1+o(1)),\\ 
      \sigma_{\min}(\bm{K}^{-1}) &\geq \min\limits_{1\leq j \leq N} \left\{ \bigg|\frac{m  - k_j }{m  -\lambda_j }\bigg|\right\}\lambda_N^{3/2}(1+o(1)),
\end{split}
\end{equation}
where  $\sigma_{\min}(\bm{K}^{-1})$ denotes the minimum singular value of the matrix $\bm{K}^{-1}$.
    This leads to the estimate:
    \begin{equation}\label{est-6}
        \|\bm{K}\|_2  \!\leq\! \min\limits_{1\leq j \leq N} \left\{ \bigg|\frac{m  \!-\! k_j }{m  \!-\!\lambda_j }\bigg|\right\}^{-1} \lambda_N^{-\frac{3}{2}}(1\!+\!o(1)).
    \end{equation} 
    
Then, we turn our attention to the term $\sum\limits_{i,j=1}^N\|\partial_{\bm{n}}\phi_j\|_{L^2(\varGamma_1)}^2\cdot(k_i-\lambda_j)^{-2}$.  We decompose the summation as follows:
\begin{equation*}
\begin{split}
    \sum_{i,j=1}^N \frac{\|\partial_{\bm{n}}\phi_j\|_{L^2(\varGamma_1)}^2}{(k_i-\lambda_j)^2}
     =N \lambda_N^{\frac{3}{2}} + \sum_{i\neq j}^N \frac{\|\partial_{\bm{n}}\phi_j\|_{L^2(\varGamma_1)}^2}{(k_i-\lambda_j)^2} .
\end{split} 
\end{equation*}
This leads to the  estimate:
\begin{equation}\label{est-11}
\begin{split}
       \sum_{i,j=1}^N \frac{\|\partial_{\bm{n}}\phi_j\|_{L^2(\varGamma_1)}^2}{(k_i-\lambda_j)^2} \lesssim  N \lambda_N^{\frac{3}{2}}.
\end{split}
\end{equation}

Finally, we need to estimate  $\zeta_{i,j}\cdot\|\partial_{\bm{n}}\phi_j\|_{L^2(\varGamma_1)}^{-2}$. By a direct calculation, we obtain:
\begin{equation*}
\begin{split} \zeta_{i,j}\cdot\|\partial_{\bm{n}}\phi_j\|_{L^2(\varGamma_1)}^{-2} 
    = &\sum_{n=N+1}^{\infty} \frac{ \langle \partial_{\bm{n}} \phi_j, \partial_{\bm{n}}\phi_n \rangle^2}{(k_i+\lambda_n)^2\cdot\|\partial_{\bm{n}}\phi_j\|_{L^2(\varGamma_1)}^2}\\
     \leq& \sum_{n=N+1}^{\infty} \frac{\|\partial_{\bm{n}}\phi_n\|_{L^2(\varGamma_1)}^2}{(k_i+\lambda_n)^2}.
\end{split}
\end{equation*}
For the case $d=1$, Proposition \ref{pro-1} ensures that the series $\sum_{n=1}^{\infty}\|\partial_{\bm{n}}\phi_n\|_{L^2(\varGamma_1)}^2(k_i+\lambda_n)^{-2}$ is bounded, which implies that the tail sum $\sum_{n=N+1}^{\infty}\|\partial_{\bm{n}}\phi_n\|_{L^2(\varGamma_1)}^2(k_i+\lambda_n)^{-2}$ vanishes as $N\to +\infty$. Consequently, for $d\in\{1,2,3\}$, applying the Stolz-Ces\`aro theorem \cite{kaczor2000problems}  yields
\begin{equation*}
\begin{split}
&\lim_{N\to\infty}\frac{\sum_{n=N+1}^{\infty}\|\partial_{\bm{n}}\phi_n\|_{L^2(\varGamma_1)}^2(k_i+\lambda_n)^{-2}}{N^{1-1.5/d}}\\
= & \lim_{N\to\infty} \frac{\|\partial_{\bm{n}}\phi_{N+1}\|_{L^2(\varGamma_1)}^2(k_i+\lambda_{N+1})^{-2}}{-(N+1)^{1-1.5/d}+N^{1-1.5/d}}=0,
\end{split}
\end{equation*}
which leads to the estimate
\begin{equation}\label{est-12}
\zeta_{i,j}\cdot\|\partial_{\bm{n}}\phi_j\|_{L^2(\varGamma_1)}^{-2} \lesssim N^{1-1.5/d}.
\end{equation}
Combining \eqref{est-6}, \eqref{est-11}, \eqref{est-12} with Proposition \ref{pro-1}, for $d\in\{1,2,3\}$, we conclude that  \eqref{condition-1} is always satisfied when selecting  sufficiently large $N$.\end{proof} 
\begin{remark}
Theorem \ref{stability-thm} demonstrates that the linear component of the closed-loop system \eqref{evolution} achieves quantitative rapid stabilization. This property is fundamental for establishing the stabilization of the original nonlinear system \eqref{evolution}.
\end{remark}
\begin{lemma}\label{lem2}
 For a sufficiently large integer $N$ and spatial dimensions $d \in \{1, 2, 3\}$, the following estimate holds:
\begin{equation}\label{z-estimate}
\|z(\cdot,t)\| \le 2\|p(\cdot,t)\|.
\end{equation}
\end{lemma}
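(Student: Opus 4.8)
The plan is to exploit the change of variables \eqref{dynamic-extension}, which expresses $z(\cdot,t)=p(\cdot,t)+g(\cdot,t)$ with $g:=\sum_{i=1}^{N}\mathscr{D}_iU_i$ at each fixed time, so that \eqref{z-estimate} is a purely algebraic (fixed-$t$) consequence of a bound of the form $\|g\|\lesssim\|p\|$. Under the state feedback \eqref{U} we have $U_i=\langle \bm{K}\bm{p}_s,\bm{T}_i\rangle_{\mathbb{R}^N}$, so $g$ depends linearly on the finite vector $\bm{p}_s=[p_1,\dots,p_N]^{\top}$, and throughout I would use the elementary inequality $\|\bm{p}_s\|\le\|p(\cdot,t)\|$ coming from the eigenbasis expansion of $p$.

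Next I would expand $g=\sum_{n\ge 1}g_n\phi_n$ with $g_n=\sum_{i=1}^N\langle\mathscr{D}_iU_i,\phi_n\rangle$ and split $\|g\|^2=\sum_{n=1}^{N}g_n^2+\sum_{n>N}g_n^2$. Applying Green's formula \eqref{ys} together with the expansion of $U_i$ through $\bm{T}_i$, the low-frequency coefficients collapse to $g_n=(\bm{B}\bm{K}\bm{p}_s)_n$ for $n\le N$, where $\bm{B}$ is exactly the matrix in \eqref{a}; hence $\sum_{n=1}^{N}g_n^2=\|\bm{B}\bm{K}\bm{p}_s\|^2\le\|\bm{B}\bm{K}\|_2^2\,\|p\|^2$. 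For the high-frequency tail, the same computation gives $g_n=-\sum_{i=1}^N(k_i+\lambda_n)^{-1}\langle U_i,\partial_{\bm{n}}\phi_n\rangle_{L^2(\varGamma_1)}$, and a Cauchy--Schwarz estimate in $n$ then controls $\sum_{n>N}g_n^2$ by a combination of the quantities $\zeta_{i,j}$, $\|\bm{K}\|_2$ and $\|\bm{p}_s\|^2$ governed by \eqref{est-6} and \eqref{est-12}. The high-frequency part is handled exactly as in the proof of Theorem \ref{stability-thm}: combining \eqref{est-6}, \eqref{est-12} and the Rellich/Weyl asymptotics of Proposition \ref{pro-1} yields $\sum_{n>N}g_n^2=o(\|p\|^2)$ as $N\to\infty$, with the dimension restriction $d\in\{1,2,3\}$ entering through the exponent $1-1.5/d$ in \eqref{est-12}.

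It would then remain to analyze the low-frequency term, which requires the sharp behaviour of $\bm{B}\bm{K}$. Using the factorization $\bm{K}^{-1}=-\operatorname{diag}\big((m-\lambda_j)^{-1}\big)(m\bm{B}-\bm{C})$ implicit in \eqref{est-5} and the diagonal-dominance argument of \cite{qi1984some} already invoked for \eqref{est-7}, one finds $\bm{B}_{nn}\sim\lambda_N^{3/2}$ and $\bm{K}_{nn}\sim-\lambda_N^{-3/2}$, so that the diagonal of $\bm{B}\bm{K}$ tends to $-1$ while that of $\bm{I}+\bm{B}\bm{K}$ is $o(1)$; controlling the off-diagonal entries in the same manner keeps $\|\bm{B}\bm{K}\|_2$ bounded (approaching $1$). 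The main obstacle is precisely this cancellation: the entries of $\bm{B}$ and $\bm{K}$ individually blow up like $\lambda_N^{\pm 3/2}$, so the bound on the low modes cannot be read off from crude submultiplicativity $\|\bm{B}\|_2\|\bm{K}\|_2$ and must instead be extracted from the diagonal-dominance estimates.

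Because $\|\bm{B}\bm{K}\|_2=1+o(1)$ rather than strictly below $1$, the bare triangle inequality $\|z\|\le\|p\|+\|g\|$ is borderline, so for the clean constant I would instead work with the direct decomposition $\|z\|^2=\|(\bm{I}+\bm{B}\bm{K})\bm{p}_s\|^2+\sum_{n>N}(p_n+g_n)^2$. Here the near-cancellation $\bm{z}_s=(\bm{I}+\bm{B}\bm{K})\bm{p}_s$ makes the first term $o(\|p\|^2)$, while the second is estimated by $\sum_{n>N}(p_n+g_n)^2\le 2\sum_{n>N}p_n^2+2\sum_{n>N}g_n^2\le 2\|p\|^2+o(\|p\|^2)$. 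Consequently $\|z\|^2\le 4\|p\|^2$ for all sufficiently large $N$ and $d\in\{1,2,3\}$, which establishes \eqref{z-estimate}.
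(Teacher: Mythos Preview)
Your route differs substantially from the paper's, and the central matrix step is not justified. The paper never expands $g=\sum_{i}\mathscr{D}_iU_i$ in the eigenbasis. Instead it applies the uniform operator bound $\|\mathscr{D}_i\alpha\|^2\le\gamma\,\|\alpha\|_{L^2(\varGamma_1)}^2$ for the Dirichlet maps, together with the identity $\sum_{i=1}^N\|U_i\|_{L^2(\varGamma_1)}^2=\bm{u}^\top\bm{B}\bm{u}$ (immediate from the definitions of $\bm{T}_i$ and $\bm{B}$), to obtain in one stroke
\[
\Bigl\|\sum_{i=1}^N\mathscr{D}_iU_i\Bigr\|^2\le \gamma N\,\bm{u}^\top\bm{B}\bm{u}\le \gamma N\,\|\bm{B}\|_2\|\bm{K}\|_2^2\|\bm{p}_s\|^2\lesssim N\lambda_N^{-3/2}\|p\|^2.
\]
Since $N\lambda_N^{-3/2}\sim N^{1-3/d}$ by Weyl's law, the bare triangle inequality $\|z\|\le\|p\|+\|g\|$ already gives \eqref{z-estimate} for large $N$; no low/high-frequency split and no cancellation analysis are needed.

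The concrete gap in your argument is the inference that $\bm{B}_{nn}\sim\lambda_N^{3/2}$ and $\bm{K}_{nn}\sim-\lambda_N^{-3/2}$ force the diagonal of $\bm{B}\bm{K}$ to tend to $-1$: this conflates $(\bm{B}\bm{K})_{nn}$ with $\bm{B}_{nn}\bm{K}_{nn}$ and discards the cross sums $\sum_{l\ne n}\bm{B}_{nl}\bm{K}_{ln}$. The diagonal-dominance estimate \eqref{est-7} is for $\bm{K}^{-1}$, not for $\bm{K}$, so it does not propagate to the product $\bm{B}\bm{K}$ in the way you suggest. In fact the paper's bound above, specialized to $p$ supported on the first $N$ modes, yields $\|\bm{B}\bm{K}\|_2^2\lesssim N\lambda_N^{-3/2}\to 0$ for $d\in\{1,2\}$, which directly contradicts your claim $\|\bm{B}\bm{K}\|_2=1+o(1)$. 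The ``near-cancellation'' machinery for $\bm{I}+\bm{B}\bm{K}$ is therefore built on a mistaken premise; the tool you are missing is precisely the uniform $L^2(\varGamma_1)\to L^2(\Omega)$ boundedness of the Dirichlet maps, which makes the whole spectral expansion of $g$ unnecessary.
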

\begin{proof}
Recalling \eqref{dynamic-extension}, we have
\begin{equation}\label{est-10}
\|z\|\le \|p\|+\bigg\|\sum_{i=1}^N \mathscr{D}_iU_i\bigg\|.
\end{equation}
It follows the Cauchy--Schwarz inequality that
\[
\bigg\|\sum_{i=1}^N \mathscr{D}_iU_i\bigg\|^2
\le N\sum_{i=1}^N \|\mathscr{D}_iU_i\|^2 .
\]
Moreover, the Dirichlet operator estimate in \cite{munteanu2017stabilisation} yields the existence of a constant
$\gamma>0$ (independent of $i$ and $N$) such that
\[
\|\mathscr{D}_iU_i\|^2\le \gamma\,\|U_i\|_{L^2(\varGamma_1)}^2,\; i=1,\dots,N.
\]
Recalling the definition of $U_i$ in \eqref{Ui} and $\bm{B}$ in \eqref{a} gives
\[
\sum_{i=1}^N \|U_i\|_{L^2(\varGamma_1)}^2 = \bm{u}^\top \bm{B}\bm{u}
\le \|\bm{B}\|_2\,\|\bm{K}\|_2^2\|\bm{p}_s\|_{\mathbb{R}^N}^2 .
\]
Similar to \eqref{est-7}, we obtain
\begin{equation}\label{b}
    \|\bm{B}\|_2 \le \lambda_N^{\frac32}(1+o(1)).
\end{equation}
Meanwhile, based on \eqref{est-6}, it can be deduced that
\[
\bigg\|\sum_{i=1}^N \mathscr{D}_iU_i\bigg\|^2 \lesssim N\lambda_N^{-\frac32}\|p\|^2 .
\]
Therefore, for sufficiently large $N$, \eqref{z-estimate} holds.
\end{proof}

Based on Lemma \ref{lem2}, we  list the following lemma which is used to illustrate  the property of the nonlinear operator $\mathcal{G}(p)$.
\begin{lemma}\label{lem3}
Consider $\mathcal{G}(p)$ defined in \eqref{ag}. For a sufficiently large integer $N$ and spatial dimensions $d \in \{1, 2, 3\}$, the following estimate holds:
\begin{equation}\label{estimate-9}
\|\mathcal{G}(p)\|\leq 6\sqrt{N}L\|p(\cdot,t)\|.
\end{equation}
\end{lemma}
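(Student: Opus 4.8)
The plan is to control $\mathcal{G}(p)$ by splitting it into the genuine nonlinearity and the finite-rank correction, arranging that the former contributes the summand $2$ and the latter the summand $4$ in the constant $6\sqrt{N}L$. From \eqref{ag} and the triangle inequality,
$$\|\mathcal{G}(p)\|\le\|f(z)\|+\Big\|\sum_{i=1}^{N}\mathscr{D}_i\big(\bm{T}_i^\top\bm{K}(\bm{I}+\bm{B}\bm{K})^{-1}\bm{f}_s\big)\Big\|.$$
The first term is handled at once: since $f$ is $L$-Lipschitz with $f(0)=0$, $\|f(z)\|\le L\|z\|$, and Lemma \ref{lem2} gives $\|z\|\le 2\|p\|$, whence $\|f(z)\|\le 2L\|p\|\le 2\sqrt{N}L\|p\|$. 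Bessel's inequality then supplies the companion bound $\|\bm{f}_s\|_{\mathbb{R}^N}\le\|f(z)\|\le 2L\|p\|$, which I reuse below.

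For the correction term I set $\bm{v}:=\bm{K}(\bm{I}+\bm{B}\bm{K})^{-1}\bm{f}_s$ and note that $\sum_i\mathscr{D}_i(\bm{T}_i^\top\bm{v})$ has exactly the form treated in Lemma \ref{lem2} with the control vector $\bm{u}$ replaced by $\bm{v}$. I would rerun that chain verbatim: Cauchy--Schwarz extracts the factor $N$, the Dirichlet estimate $\|\mathscr{D}_i(\cdot)\|^2\le\gamma\|\cdot\|_{L^2(\varGamma_1)}^2$ is applied termwise, and the boundary norms reassemble, via the definition \eqref{a} of $\bm{B}$, into the quadratic form $\sum_i\|\bm{T}_i^\top\bm{v}\|_{L^2(\varGamma_1)}^2=\bm{v}^\top\bm{B}\bm{v}$. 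Thus the correction term is $\le\sqrt{N\gamma}\,(\bm{v}^\top\bm{B}\bm{v})^{1/2}$, and the whole lemma reduces to the uniform-in-$N$ estimate $\gamma\,\bm{v}^\top\bm{B}\bm{v}\le 16L^2\|p\|^2$.

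To close this I feed in the available matrix asymptotics: $\|\bm{B}\|_2\le\lambda_N^{3/2}(1+o(1))$ from \eqref{b} and $\|\bm{K}\|_2\lesssim\lambda_N^{-3/2}$ from \eqref{est-6}, together with $\|\bm{f}_s\|\le 2L\|p\|$, so that $\bm{v}^\top\bm{B}\bm{v}\le\|\bm{B}\|_2\|\bm{K}\|_2^2\|(\bm{I}+\bm{B}\bm{K})^{-1}\|_2^2\|\bm{f}_s\|^2$ is tamed provided one proves the operator-norm bound $\|(\bm{I}+\bm{B}\bm{K})^{-1}\|_2\lesssim\lambda_N^{3/4}$. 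Indeed this makes the three factors combine to $\lambda_N^{3/2}\cdot\lambda_N^{-3}\cdot\lambda_N^{3/2}=O(1)$, a bounded quantity; taking $N$ large then pins the correction term at $\le 4\sqrt{N}L\|p\|$, and combined with $\|f(z)\|\le 2\sqrt{N}L\|p\|$ this yields $\|\mathcal{G}(p)\|\le 6\sqrt{N}L\|p\|$.

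The hard part is precisely this estimate on $\|(\bm{I}+\bm{B}\bm{K})^{-1}\|_2$, and it cannot be obtained by submultiplicativity: the choice \eqref{ki} of $k_i$ forces the diagonal entries of $\bm{K}^{-1}$ and $\bm{B}$ to cancel to leading order, so each diagonal entry of $\bm{I}+\bm{B}\bm{K}=(\bm{K}^{-1}+\bm{B})\bm{K}$ is of strictly lower order than a crude bound predicts and naive factorizations lose a full power of $\lambda_N$. I would instead exploit the exact identity $\bm{I}+\bm{B}\bm{K}=(\bm{K}^{-1}+\bm{B})\bm{K}$ together with the explicit simplification obtained from \eqref{a} and \eqref{est-5},
$$(\bm{K}^{-1}+\bm{B})_{jn}=\frac{\langle\partial_{\bm{n}}\phi_j,\partial_{\bm{n}}\phi_n\rangle}{m-\lambda_j}\sum_{i=1}^{N}\frac{1}{k_i-\lambda_n},$$
which exhibits $\bm{K}^{-1}+\bm{B}$ as a diagonal times the Gram matrix of $\{\partial_{\bm{n}}\phi_j\}$ on $\varGamma_1$ times a diagonal. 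The Rellich inequality in Proposition \ref{pro-1} then controls the singular values of the Gram factor and restores the missing power of $\lambda_N$. Carrying out this asymptotic bookkeeping, valid for $d\in\{1,2,3\}$ exactly as in \eqref{est-5}--\eqref{est-7}, is the only delicate point; everything else is the mechanical repetition of Lemma \ref{lem2}.
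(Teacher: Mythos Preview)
Your proposal matches the paper's proof essentially step for step: the same triangle-inequality split, the same Lemma~\ref{lem2} chain (Cauchy--Schwarz, Dirichlet-map bound, reassembly into $\bm{v}^\top\bm{B}\bm{v}$) for the correction term, the same factorization $(\bm{I}+\bm{B}\bm{K})^{-1}=\bm{K}^{-1}(\bm{K}^{-1}+\bm{B})^{-1}$, and the same diagonal-dominance estimate in the style of \eqref{est-7} on $\bm{K}^{-1}+\bm{B}$ to reach $\|(\bm{I}+\bm{B}\bm{K})^{-1}\|_2\lesssim\lambda_N^{3/4}$. Your explicit identity for $(\bm{K}^{-1}+\bm{B})_{jn}$ is a clean simplification the paper leaves implicit; just be aware that the ``missing power of $\lambda_N$'' is recovered not by Rellich acting on the singular values of the Gram factor $G$ (Rellich only pins down $G_{jj}$, not $\sigma_{\min}(G)$) but by the dominant $i=n$ term $1/(k_n-\lambda_n)=-\lambda_N^{3/4}/\|\partial_{\bm{n}}\phi_n\|_{L^2(\varGamma_1)}$ in your $D_2$ diagonal, which is precisely the mechanism behind the paper's \eqref{K-B}.
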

\begin{proof}
As preceding in Lemma \ref{lem2}, we derive
\begin{equation*}
    \left\| \sum_{i=1}^{N} \mathscr{D}_i ( \bm{T}_i^{\top} \bm{K} (\bm{I} + \bm{B}\bm{K})^{-1} \bm{f}_s ) \right\|^2 \le \gamma N \| \bm{B} \|_2 \| \bm{K} \|_2^2 \left\| (\bm{I} + \bm{B}\bm{K})^{-1} \bm{f}_s \right\|^2 .
\end{equation*}
By invoking \eqref{est-6} and \eqref{b}, we derive  
\begin{equation}\label{est-1}
\begin{split}
    \bigg\|\sum_{i=1}^{N} \mathscr{D}_i (\bm{T}_i^\top \bm{K} (\bm{I}+\bm{B}\bm{K})^{-1} \bm{f}_s)\bigg\|^2 
        \leq \gamma N\lambda_N^{-\frac{3}{2}}\|(\bm{I}+\bm{B}\bm{K})^{-1}\bm{f}_s\|^2(1+o(1)).
\end{split}
\end{equation}
 Utilizing the factorization $(\bm{I}+\bm{B}\bm{K})^{-1} = \bm{K}^{-1}(\bm{K}^{-1}+\bm{B})^{-1}$, a direct computation yields
\begin{equation*}
\|(\bm{I}+\bm{B}\bm{K})^{-1}\|_2 \leq \|(\bm{K}^{-1}+\bm{B})^{-1}\|_2 \|\bm{K}^{-1}\|_2.
\end{equation*}
By following a derivation analogous to  \eqref{est-7}, we obtain
\begin{equation}\label{K-B}
\begin{split}
\sigma_{\min}(\bm{K}^{-1}+\bm{B}) \geq \min_{1\leq j \leq N} \left\{ \frac{\|\partial_{\bm{n}}\phi_j\|_{L^2(\varGamma_1)}}{m-\lambda_j} \right\} \lambda_N^{\frac{3}{4}}(1+o(1)).
\end{split}
\end{equation}
Consequently, combining \eqref{K-B} with \eqref{est-7} gives
\begin{equation}\label{est-13}
\|(\bm{I}+\bm{B}\bm{K})^{-1}\|_2 \leq  \lambda_N^{\frac{3}{4}}(1+o(1)).
\end{equation}
In view of the Lipschitz property and Lemma \ref{lem2}, it follows that
\begin{equation}\label{est-2}
\begin{split}
        \|\bm{f}_s\|_{\mathbb{R}^N}\leq \|f\|\leq 2L\|z(\cdot,t)\|.
\end{split}
\end{equation}
Substituting \eqref{est-13},\eqref{est-2} into \eqref{est-1}, for sufficiently large $N$, we obtain
\begin{equation}\label{est-4}
\begin{split}
     \bigg\|\sum_{i=1}^{N} \mathscr{D}_i (\bm{T}_i^\top \bm{K} (\bm{I}+\bm{B}\bm{K})^{-1} \bm{f}_s)\bigg\|
     \leq& 4 \sqrt{N}L \|p(\cdot,t)\|.
\end{split}
\end{equation}
Finally, by recalling the definition of $\mathcal{G}(p)$ given below \eqref{evolution} and invoking \eqref{est-2}, \eqref{est-4}, we arrive at \eqref{estimate-9}.
\end{proof}

Based on the Theorem \ref{stability-thm}, Lemma \ref{lem2} and Lemma \ref{lem3}, we establish the following theorem.
\begin{theorem}\label{state-feedback-thm}
Consider the system \eqref{new} under the control law \eqref{Ui}. For any initial condition $p^0 \in L^2(\Omega)$ and spatial dimensional $d\in\{1,2,3\}$, there exist a sufficiently large integer $N \in \mathbb{N}^+$ and a tuning parameter $m>6\sqrt{N}L+0.5$ such that the system \eqref{new} admits a unique solution $p(\cdot,t)$ fulfilling the exponential decay estimate:
\begin{equation}\label{++z2}
\|p(\cdot,t)\|^2 \leq \kappa_2 e^{-\kappa_3 t} \|p(\cdot,0)\|^2,
\end{equation}
where $\kappa_2, \kappa_3 > 0$ are constants.
\end{theorem}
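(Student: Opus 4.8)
The plan is to treat the closed-loop system in its abstract form \eqref{evolution}, namely $\dot p = \tilde{\mathcal A}p + \mathcal G(p)$, as a semilinear evolution equation and to run a variation-of-constants (Duhamel) argument in which the nonlinearity is absorbed through the growth estimate of Lemma \ref{lem3} followed by a Gronwall loop. First I would settle well-posedness: since $f$ is globally Lipschitz and the correction term in $\mathcal G$ depends linearly on $\bm{f}_s$, the operator $\mathcal G$ is globally Lipschitz on $L^2(\Omega)$; combined with the fact that $\tilde{\mathcal A}$ generates a $C_0$-semigroup, standard semilinear theory (e.g.\ \cite[Theorem 6.3.3]{pazy2012semigroups}) yields a unique global mild solution $p(\cdot,t)$, so the qualitative part of the claim is immediate and only the quantitative decay remains.

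Next I would record the decay of the linear flow. Theorem \ref{stability-thm} guarantees, under condition \eqref{condition-1}, that taking square roots in \eqref{decay_est} gives $\|e^{\tilde{\mathcal A}t}\|_{\mathscr L(L^2(\Omega))}\le e^{-(m-1/2)t}$. Writing the mild solution as
\begin{equation*}
p(t)=e^{\tilde{\mathcal A}t}p^0+\int_0^t e^{\tilde{\mathcal A}(t-s)}\mathcal G(p(s))\,ds,
\end{equation*}
taking norms, and invoking Lemma \ref{lem3} to bound $\|\mathcal G(p(s))\|\le 6\sqrt N L\|p(s)\|$, I obtain
\begin{equation*}
\|p(t)\|\le e^{-(m-1/2)t}\|p^0\|+6\sqrt N L\int_0^t e^{-(m-1/2)(t-s)}\|p(s)\|\,ds.
\end{equation*}
Multiplying by $e^{(m-1/2)t}$ and applying Gronwall's inequality to $\psi(t):=e^{(m-1/2)t}\|p(t)\|$ yields $\|p(t)\|\le e^{-(m-1/2-6\sqrt N L)t}\|p^0\|$. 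Squaring produces \eqref{++z2} with $\kappa_2=1$ and $\kappa_3=2\bigl(m-1/2-6\sqrt N L\bigr)$, which is strictly positive exactly under the stated hypothesis $m>6\sqrt N L+0.5$.

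The main obstacle is not the Gronwall loop itself but the \emph{compatibility} of the two constraints imposed on the tuning parameter $m$. The quantitative decay requires the lower bound $m>6\sqrt N L+1/2$, whereas the linear stability Theorem \ref{stability-thm} forces the upper bound \eqref{condition-1}, whose right-hand side behaves like $\lambda_{N+1}$ up to lower-order corrections established in its proof. I would therefore verify that the admissible window for $m$ is nonempty once $N$ is large: by Weyl's law (Proposition \ref{pro-1}) $\lambda_{N+1}$ grows like $N^{2/d}$, while the lower bound grows only like $N^{1/2}$. Since $2/d>1/2$ precisely when $d<4$, for $d\in\{1,2,3\}$ the gap $\lambda_{N+1}-6\sqrt N L\to\infty$, so one may first fix $N$ large enough that \eqref{condition-1}, Lemma \ref{lem2} and Lemma \ref{lem3} hold simultaneously, and then select $m$ in the resulting nonempty interval. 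This is also the precise juncture at which the dimension restriction $d\le 3$ is indispensable: for $d\ge 4$ the $N^{1/2}$ growth of the nonlinear contribution would match or overtake the $N^{2/d}$ growth of $\lambda_{N+1}$, and the admissible window for $m$ could collapse.
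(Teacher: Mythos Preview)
Your proof is correct and follows essentially the same route as the paper's: well-posedness via Pazy's theorem, Duhamel formula, the semigroup bound from Theorem~\ref{stability-thm}, the nonlinear bound from Lemma~\ref{lem3}, and the weighted Gronwall argument leading to $\|p(t)\|\le e^{-(m-1/2-6\sqrt{N}L)t}\|p^0\|$. Your additional paragraph on the compatibility of the constraints on $m$ (comparing the $N^{1/2}$ growth of the lower bound against the $N^{2/d}$ growth of $\lambda_{N+1}$, and identifying $d<4$ as the precise threshold) is actually more explicit than what appears in the paper, which leaves this verification implicit; it is a worthwhile addition.
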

\begin{proof}
We first address the well-posedness of the closed-loop system \eqref{evolution}. Since the operator $\tilde{\mathcal{A}}$ generates an analytic semigroup on $L^2(\Omega)$, it follows from \cite[Theorem 6.3.3]{pazy2012semigroups} that for any initial condition $p^0 \in L^2(\Omega)$, the system \eqref{evolution} admits a unique classical solution $p \in C([0, \infty); L^2(\Omega))$. 
 
Next, we analyze the stability of the closed-loop system \eqref{evolution}. According to the Duhamel formula, the solution to \eqref{evolution} can be expressed as 
\begin{equation}\label{banqun}
    p(t)=e^{\tilde{\mathcal{A}}t} p^0 + \int_0^t e^{\tilde{\mathcal{A}}(t-s)}\,\mathcal{G}(p(s))\,ds.
\end{equation}
By taking the $L^2$-norm of both sides in \eqref{banqun} and invoking Lemma \ref{lem3}, we obtain 
\begin{equation*}\label{estimate-10}
\begin{split}
        \|p(t)\|
&\le e^{-(m-0.5) t}\|p^0\|
+ 6\sqrt{N}L \int_0^t e^{-(m-0.5) (t-s)} \|p(s)\|\,ds . 
\end{split}
\end{equation*}
We define the weighted function 
$$y(t):=e^{(m-0.5) t}\|p(t)\|.$$
Multiplying the inequality by $e^{(m-0.5) t}$ yields 
$$y(t)\le \|p^0\| + 6\sqrt{N}L \int_0^t y(s)\,ds.$$ 
By applying Gronwall's inequality, we have 
$$y(t)\le \|p^0\| e^{6\sqrt{N}L t}.$$ 
Consequently, we arrive at 
$$\|p(t)\|\le e^{-(m-0.5-6\sqrt{N}L)t}\|p^0\|.$$ 
This completes the proof of \eqref{++z2}.
\end{proof}
     
\subsection{Output feedback control} \label{sec3.3}
In this subsection, we construct the observer-based output feedback control law as follow:
\begin{equation}\label{observer-based control}
    \hat{\bm{u}}=\bm{K}\hat{\bm{p}}_s=\bm{K}\bm{p}_s - \bm{K}\bm{\epsilon}_s,
\end{equation}
where $\hat{\bm{p}}_s=[\hat{p}_1,\dots,\hat{p}_{N}]^{\top}$ and $\bm{\epsilon}_s=[\epsilon_1,\dots,\epsilon_N]^{\top}$. 

To validate this design, we rely on the following established properties: Theorem \ref{errorthm} establishes the exponential stability of the estimation error $\epsilon$. Furthermore, Theorem \ref{state-feedback-thm} guarantees the exponential stabilization of the system under the state feedback control law designed in \eqref{U}.  In this subsection, we leverage the results established in Theorem \ref{errorthm} and Theorem \ref{state-feedback-thm} to obtain that the proposed observer-based control effectively stabilizes the system.

Substituting the control law \eqref{observer-based control} into the system \eqref{new} yields the following closed-loop dynamics:
\begin{equation*}
\begin{cases}
    \partial_t p = \Delta p + f\bigg(p+\sum\limits_{i=1}^{N}\mathscr{D}_i\hat{U}_i\bigg) - \sum\limits_{i=1}^{N} \big[\mathscr{D}_i (\dot{\hat{U}}_i-k_i\hat{U}_i)-2\lambda_i\langle \mathscr{D}_i\hat{U}_i,\phi_i\rangle\phi_i\big], \\
    \hspace{9.5cm}x\in \Omega, \\
    p = 0, \; x\in \partial\Omega, \\
    p(x,0) = z^0(x), \;x\in \Omega.
\end{cases}
\end{equation*}
   The closed-loop system can be reformulated as the following abstract evolution equation:
\begin{equation*}
    \dot{p}(t) = \mathcal{A}_{cl} p(t)  + \mathcal{R}(\epsilon),
\end{equation*}
where $\mathcal{A}_{cl}$ and perturbation term $\mathcal{R}(\epsilon)$ are defined as:
\begin{equation*}
\begin{aligned}
    \mathcal{A}_{cl}p &= \Delta p + f\bigg(p + \sum_{i=1}^N \mathscr{D}_i U_i\bigg)   - \sum\limits_{i=1}^{N} \big[\mathscr{D}_i (\dot{U}_i-k_iU_i)-2\lambda_i\langle \mathscr{D}_iU_i,\phi_i\rangle\phi_i\big], \\
    \mathcal{R}(\epsilon) &= \left[ f\bigg(p + \sum_{i=1}^N \mathscr{D}_i (U_i + U_i^\epsilon)\bigg) - f\bigg(p + \sum_{i=1}^N \mathscr{D}_i U_i\bigg) \right] - \sum_{i=1}^N \mathscr{D}_i (\dot{U}_i^\epsilon + k_i U_i^\epsilon).
\end{aligned}
\end{equation*}
Here, the operator $\mathcal{A}_{cl}$ generates the  semigroup $S_{cl}(t)$. With this formulation established, we present the main theorem of this section.
\begin{theorem}\label{thm3}
Consider the closed-loop system consisting of the plant \eqref{new}, the observer \eqref{observer-1}, and the observer-based  control  \eqref{observer-based control}. Suppose that the state observer \eqref{observer-1} is designed with a sensor configuration satisfying condition \eqref{condition6}. For any initial condition $p^0 \in L^2(\Omega)$ and and spatial dimensional $d\in\{1,2,3\}$, there exist a tuning parameter $m>6\sqrt{N}L+0.5$ and a sufficiently large integer $N \in \mathbb{N}^+$ such that the closed-loop system admits a unique classical solution $p(\cdot,t)$. Furthermore, the following exponential decay estimates hold:
\begin{equation*}\label{z3}
    \|p(\cdot,t)\|^2 \leq \kappa_4 e^{-\kappa_5 t} \|p^0\|^2,
\end{equation*}
and the system $z(x,t)$ defined in \eqref{old} satisfies
\begin{equation*}
    \|z(\cdot,t)\|^2 \leq \kappa_6 e^{-\kappa_5 t} \|z^0\|^2,
\end{equation*}
where $\kappa_i > 0$ for $i = 4, 5, 6$ are positive constants.
\end{theorem}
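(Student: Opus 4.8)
The plan is to treat the closed loop as a cascade: the observer error $\epsilon$ evolves autonomously and decays exponentially by Theorem \ref{errorthm}, entering the $p$-dynamics only as an additive, exponentially small forcing. The operator $\mathcal{A}_{cl}$ is nonlinear (it carries the Nemytskii term $f$), so rather than use its flow $S_{cl}(t)$ directly I would route everything through the \emph{linear} generator $\tilde{\mathcal{A}}$ of the state-feedback closed loop, for which the proof of Theorem \ref{stability-thm} already supplies the analytic-semigroup decay $\|e^{\tilde{\mathcal{A}}t}\|_{\mathcal{L}(L^2)}\le e^{-(m-\frac12)t}$. I would then rewrite the closed loop as $\dot p=\tilde{\mathcal{A}}p+\mathcal{G}(p)+\mathcal{P}(\epsilon)$, where $\|\mathcal{G}(p)\|\le 6\sqrt{N}L\|p\|$ by Lemma \ref{lem3}, and $\mathcal{P}(\epsilon)$ collects every term produced by replacing $\bm u$ with $\hat{\bm u}=\bm u-\bm K\bm\epsilon_s$ in \eqref{observer-based control}. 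Well-posedness follows exactly as in Theorem \ref{state-feedback-thm}: $\tilde{\mathcal{A}}$ generates an analytic semigroup, the remaining terms are globally Lipschitz in $p$ with an exponentially decaying $\epsilon$-forcing, so \cite[Theorem 6.3.3]{pazy2012semigroups} gives a unique classical solution $p\in C([0,\infty);L^2(\Omega))$ once the a priori bound below precludes blow-up.

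For the decay, the quantity $\mathcal{P}(\epsilon)$ consists of the difference of the two Nemytskii terms, bounded by $L\|\sum_i\mathscr{D}_iU_i^\epsilon\|$ via the Lipschitz property, together with the lifted control corrections $\sum_i\mathscr{D}_i(\dot U_i^\epsilon+k_iU_i^\epsilon)$, where $U_i^\epsilon$ is linear in $\bm\epsilon_s$. Using the estimates already assembled in Lemmas \ref{lem2}--\ref{lem3} (namely $\|\mathscr{D}_i\,\cdot\,\|^2\le\gamma\|\cdot\|_{L^2(\varGamma_1)}^2$, $\|\bm B\|_2\lesssim\lambda_N^{3/2}$, $\|\bm K\|_2\lesssim\lambda_N^{-3/2}$), each piece is controlled by a fixed ($N$-dependent) multiple of $\|\bm\epsilon_s\|$ or $\|\dot{\bm\epsilon}_s\|$, so that $\|\mathcal{P}(\epsilon)\|\le C_N(\|\epsilon\|+\|\dot{\bm\epsilon}_s\|)$. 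Projecting the error equation \eqref{error-1} onto $\phi_n$ yields $\dot\epsilon_n=-\lambda_n\epsilon_n+\langle f_p,\phi_n\rangle$, whence $\|\dot{\bm\epsilon}_s\|_{\mathbb{R}^N}\le(\lambda_N+L)\|\epsilon\|$; since $N$ is fixed the $\lambda_N$ prefactor is harmless, and Theorem \ref{errorthm} gives $\|\mathcal{P}(\epsilon(t))\|\le C_N'e^{-\kappa_2 t}\|\epsilon(0)\|$.

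Duhamel's formula then produces $\|p(t)\|\le e^{-(m-\frac12)t}\|p^0\|+6\sqrt{N}L\int_0^t e^{-(m-\frac12)(t-s)}\|p(s)\|\,ds+C_N'\|\epsilon(0)\|\int_0^t e^{-(m-\frac12)(t-s)}e^{-\kappa_2 s}\,ds$. The last convolution is $\lesssim e^{-\gamma t}$ for any $\gamma$ strictly below $\min(m-\tfrac12,\kappa_2)$ (a factor $t$ arises only in the resonant case $m-\tfrac12=\kappa_2$ and is absorbed by lowering $\gamma$). Setting $w(t)=e^{(m-\frac12)t}\|p(t)\|$ and applying Gronwall's inequality, the hypothesis $m>6\sqrt{N}L+\tfrac12$ makes the self-gain $6\sqrt{N}L$ strictly smaller than $m-\tfrac12$, giving $\|p(t)\|\lesssim e^{-(m-\frac12-6\sqrt{N}L)t}\|p^0\|+e^{-\gamma t}\|\epsilon(0)\|$, i.e. the claimed estimate for $p$ with $\kappa_5=\min(m-\tfrac12-6\sqrt{N}L,\gamma)$ and $\kappa_4$ absorbing the constants together with $\|\epsilon(0)\|\le\|p^0\|+\|\hat p^0\|$. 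For $z$ I would use $z=p+\sum_i\mathscr{D}_i\hat U_i$ with $\hat U_i$ linear in $\hat{\bm p}_s=\bm p_s-\bm\epsilon_s$; the same Dirichlet estimates give $\|z(t)\|\lesssim\|p(t)\|+\|\epsilon(t)\|$, and since both decay at rate $\kappa_5$, a one-time norm equivalence at $t=0$ converts this into $\|z(t)\|^2\le\kappa_6 e^{-\kappa_5 t}\|z^0\|^2$.

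I expect the main obstacle to be the treatment of the error-derivative term $\dot U_i^\epsilon$ inside $\mathcal{P}(\epsilon)$: one must show that $\|\dot{\bm\epsilon}_s\|$, and not merely $\|\epsilon\|$, decays exponentially. The eigen-projection identity above settles this cleanly precisely because $N$ is fixed; an alternative, if one prefers not to differentiate the error, is to integrate the $\dot U_i^\epsilon$ contribution by parts in the Duhamel integral, transferring the time derivative onto the analytic semigroup and absorbing it through the smoothing bound $\|\tilde{\mathcal{A}}e^{\tilde{\mathcal{A}}\tau}\|\lesssim\tau^{-1}e^{-(m-\frac12)\tau}$, which is integrable near $\tau=0$. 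The remaining work is purely bookkeeping of the convolution of the two exponential rates and of the resonant-rate case.
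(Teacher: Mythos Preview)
Your cascade strategy---routing everything through the \emph{linear} semigroup $e^{\tilde{\mathcal{A}}t}$ and absorbing the nonlinearity $\mathcal{G}(p)$ via Gronwall, exactly as in the proof of Theorem~\ref{state-feedback-thm}---is the right way to do this and is in fact cleaner than the paper's own argument, which writes a Duhamel formula for the \emph{nonlinear} flow $S_{cl}(t)$ (variation-of-constants is not valid for nonlinear generators, so the paper's step $p(t)=S_{cl}(t)p(0)+\int_0^t S_{cl}(t-\tau)\mathcal{R}(\tau)\,d\tau$ is formal at best). Your decomposition $\dot p=\tilde{\mathcal{A}}p+\mathcal{G}(p)+\mathcal{P}(\epsilon)$ repairs that.

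There is, however, one genuine error in your treatment of $\dot{\bm\epsilon}_s$. The identity $\dot\epsilon_n=-\lambda_n\epsilon_n+\langle f_p,\phi_n\rangle$ is \emph{false}: the error $\epsilon$ satisfies the heat equation on each $\Omega_i$ separately with Dirichlet data on $\partial\Omega_i$, so when you compute $\langle\Delta\epsilon,\phi_n\rangle=\sum_i\int_{\Omega_i}\Delta\epsilon_i\,\phi_n$ and integrate by parts you pick up interface terms $\sum_i\int_{\partial\Omega_i}(\partial_\nu\epsilon_i)\phi_n$ that do not cancel, since $\phi_n$ does not vanish on $\varGamma_{ij}$ and the normal derivatives of $\epsilon$ may jump there. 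Hence $\|\dot{\bm\epsilon}_s\|\le(\lambda_N+L)\|\epsilon\|$ is unjustified. The conclusion you want is nevertheless true: use parabolic smoothing on each subdomain. From \eqref{e12-1} the analytic semigroup generated by $-\mathscr{A}_i$ gives $\|\epsilon_i(t)\|_{H^2(\Omega_i)}\le C t^{-1}e^{-\lambda_{i,1}t}\|\epsilon_i(0)\|$ for small $t$ and exponential decay for $t\ge 1$ (bootstrapping through the Lipschitz term as in Theorem~\ref{errorthm}); then $\|\partial_t\epsilon\|_{L^2(\Omega)}=\big(\sum_i\|\Delta\epsilon_i+f_p\|_{L^2(\Omega_i)}^2\big)^{1/2}\lesssim e^{-\kappa_2 t}$ for $t\ge t_0>0$, which bounds $|\dot\epsilon_n|\le\|\partial_t\epsilon\|$ and hence $\|\dot{\bm\epsilon}_s\|$ directly. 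On $[0,t_0]$ everything is bounded and can be absorbed into the constant $\kappa_4$.

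Your integration-by-parts-in-time alternative also needs a correction: $\|\tilde{\mathcal{A}}e^{\tilde{\mathcal{A}}\tau}\|\lesssim\tau^{-1}e^{-(m-\frac12)\tau}$ is \emph{not} integrable at $\tau=0$, so you must exploit the extra regularity $\mathscr{D}_i:L^2(\varGamma_1)\to H^{1/2}(\Omega)\hookrightarrow\mathscr{D}((-\tilde{\mathcal{A}})^{\theta})$ for some $\theta\in(0,\tfrac14)$ and use $\|\tilde{\mathcal{A}}^{1-\theta}e^{\tilde{\mathcal{A}}\tau}\|\lesssim\tau^{\theta-1}$, which is integrable. Either route closes the argument; the rest of your proof (convolution of exponentials, Gronwall, passage to $z$) is correct.
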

\begin{proof}
    As a consequence of Theorem \ref{errorthm}, there exist constants $\kappa_7, \kappa_8 > 0$ such that:
    \begin{equation}\label{eq:perturbation_bound}
         \left\| \sum_{i=1}^N \mathscr{D}_i (\dot{U}_i^\varepsilon + k_i U_i^\varepsilon) \right\| \le \kappa_7 e^{-\kappa_8 t}.
    \end{equation}
    Combining the global Lipschitz continuity of $f(\cdot)$ with  \eqref{eq:perturbation_bound}, we get
    \begin{equation*}
        \|\mathcal{R}(\epsilon)\| \le L \left\| \sum_{i=1}^N \mathscr{D}_i U_i^\varepsilon \right\| + \left\| \sum_{i=1}^N \mathscr{D}_i (\dot{U}_i^\varepsilon + k_i U_i^\varepsilon) \right\| \le \kappa_9 e^{-\kappa_8 t}.
    \end{equation*}
    The mild solution of the system is given by Duhamel's formula:
    \begin{equation*}
        p(t) = S_{cl}(t)p(0) + \int_0^t S_{cl}(t-\tau) \mathcal{R}(\tau) d\tau.
    \end{equation*}
    Taking norms and applying the semigroup bound $\|S_{cl}(t)\| \le \kappa_2 e^{-\kappa_3 t}$ yields:
    \begin{equation*}
        \|p(t)\| \le \kappa_2 e^{-\mu t} \|p(0)\| + \kappa_9\int_0^t  e^{-\kappa_3 (t-\tau)}  e^{-\kappa_8 \tau} d\tau.
    \end{equation*}
   The proof is completed.
\end{proof}
\section{Illustrative Example}\label{sec4}
The objective of the numerical experiment is twofold: (1) to validate the feasibility of the stability condition, and (2) to illustrate the effectiveness of the proposed output feedback control and observer design.
\subsection{Numerical verification of some conditions}\label{sec4.1}
 We consider the 2-D nonlinear parabolic system on $\Omega = (0, 1)\times (0,1)$ with $\varGamma_1 = \{0\} \times (0, 1)$, $h(x,y,t)=z(0.5,y,t)$, $f(z)=50\sin(z)+50z$, and $z^0=\cos(x)$. To verify the feasibility of \eqref{condition-1}, we revisit the key terms in Lemma \ref{lem3} and Theorem \ref{stability-thm}. These terms exhibit that
\begin{equation}\label{3-term}
\begin{split}
    &\|\bm{K}\|_2 \lesssim N^{-\frac{3}{2}}, \quad \|(\bm{I}+\bm{B}\bm{K})^{-1}\|_2 \lesssim N^{\frac{3}{4}}, \quad \sum_{i,j=1}^N \frac{\zeta_{i,j}}{(k_i-\lambda_j)^{2}} \lesssim  N^{\frac{11}{4}}.
\end{split}
\end{equation}
\begin{figure}[htbp]
    \centering
    \includegraphics[width=1\textwidth]{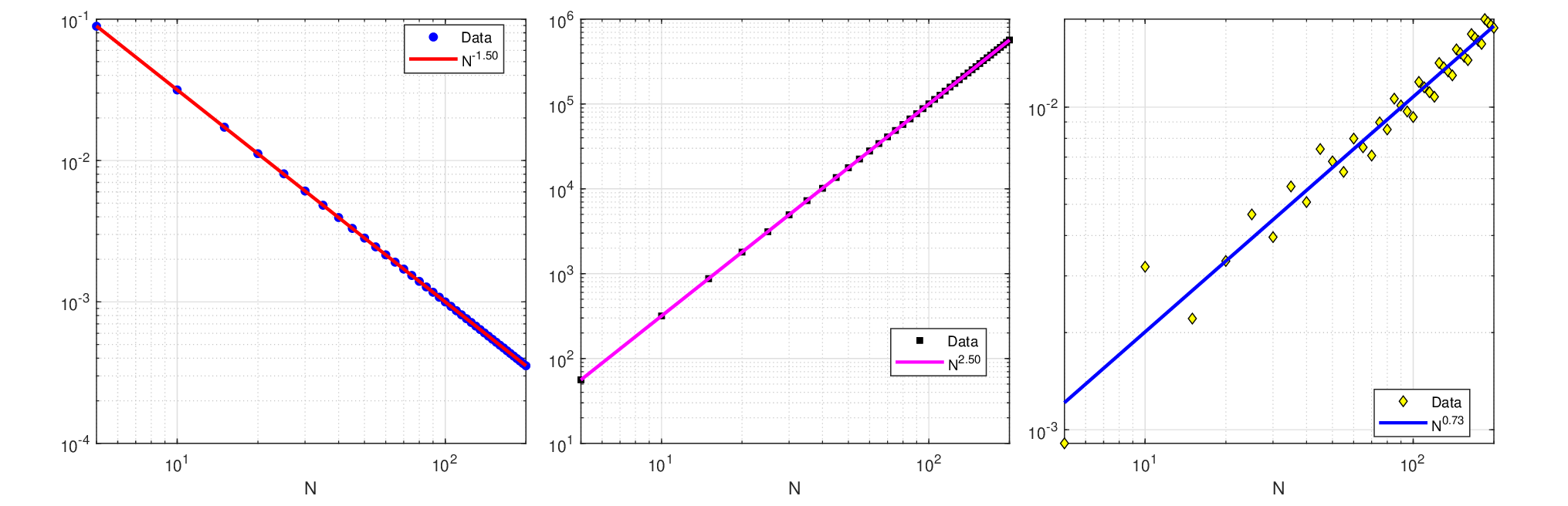}
    \caption{Numerical verification of the asymptotic scaling for \eqref{3-term}
    }
    \label{figure3}
\end{figure}
Fig. \ref{figure3} displays the asymptotic scaling of the key terms in \eqref{3-term} on a log-log scale for $N \in [5, 200]$. The subplots, from left to right, illustrate that the numerical data (markers) for $\|\bm{K}\|_2$, $\sum_{i,j=1}^N (k_i-\lambda_j)^{-2}\zeta_{i,j}$, and $\|(\bm{I}+\bm{B}\bm{K})^{-1}\|_2$ scale as $N^{-1.50}$, $N^{2.50}$, and $N^{0.73}$ (solid lines), respectively. This high consistency confirms the feasibility of the proposed control conditions for sufficiently large $N$. 

\subsection{Numerical verification of controller and observer}\label{sec4.2}
Numerical simulations are performed using the Galerkin method ($M_{modes} = 120$) and MATLAB's \texttt{ode15s} to validate the proposed strategy. With $L=100$, $N=6$, and $m=120$, Fig. \ref{fig:combined}(a) and (b) illustrate that the closed-loop system achieves rapid exponential stabilization, whereas the open-loop response is divergent. Furthermore, the exponential decay of the observer estimation error is confirmed in Fig. \ref{fig:combined}(c), which validates the theoretical error dynamics.
\begin{figure}[htbp]
    \centering
    \begin{minipage}{0.4\columnwidth}
        \centering
        \includegraphics[width=\textwidth]{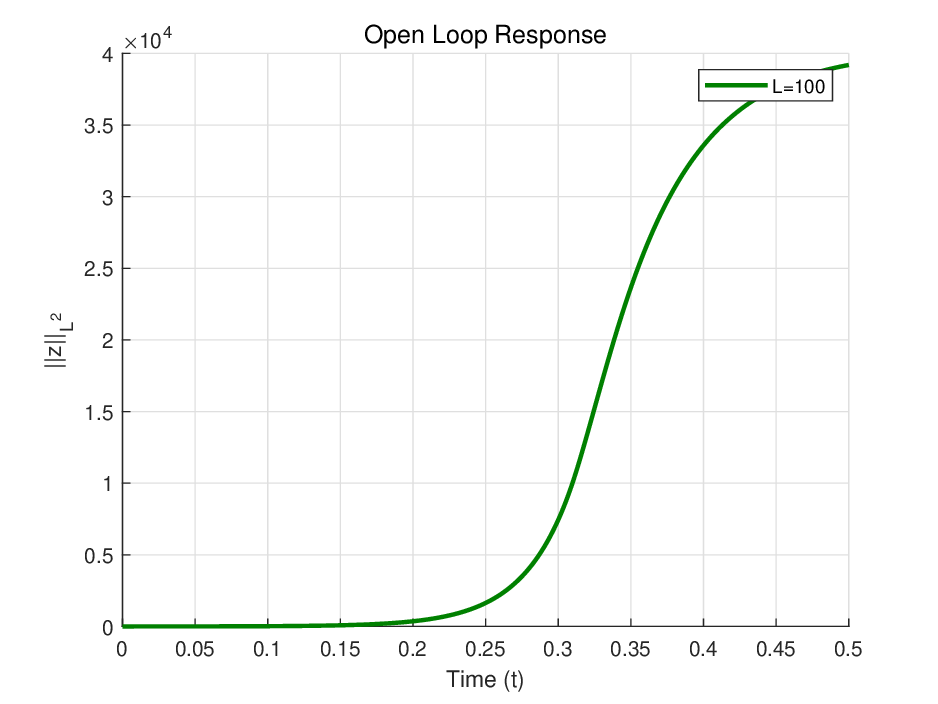}
        \vspace{-2pt} 
        \centerline{\footnotesize (a)}
    \end{minipage}
    \hspace{-2pt} 
    \begin{minipage}{0.4\columnwidth}
        \centering
        \includegraphics[width=\textwidth]{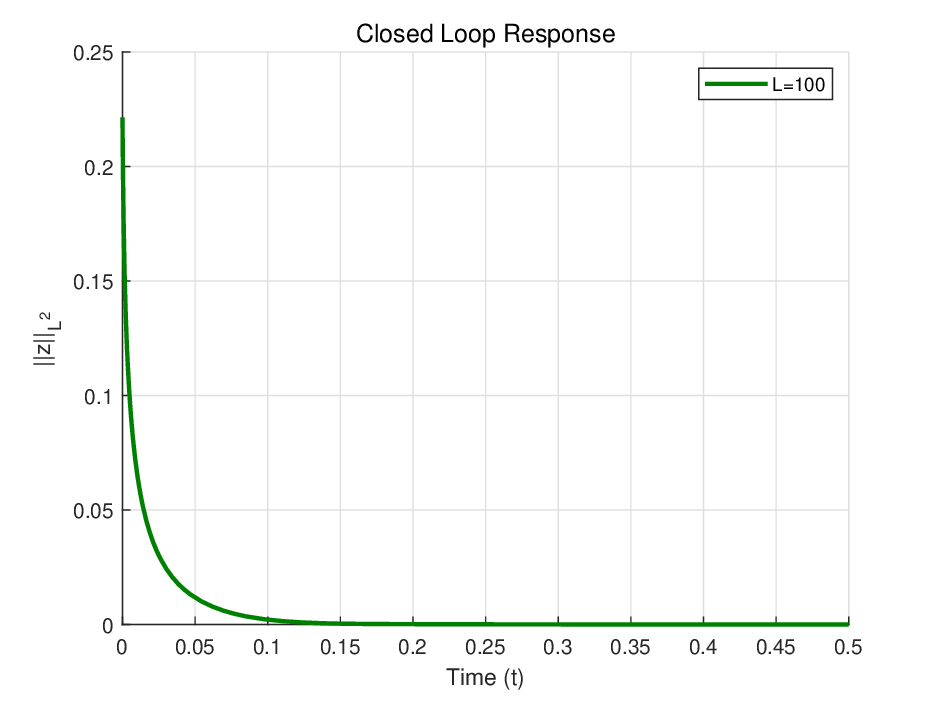}
        \vspace{-2pt}
        \centerline{\footnotesize (b)}
    \end{minipage}
    \hspace{-2pt} 
    \begin{minipage}{0.4\columnwidth}
        \centering
        \includegraphics[width=\textwidth]{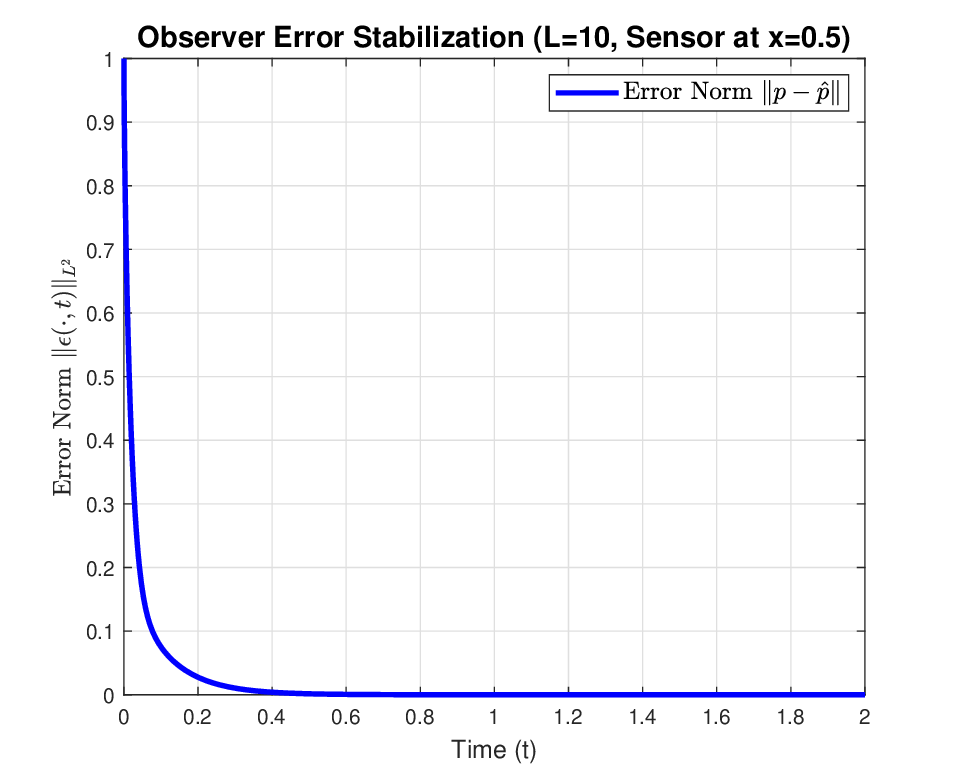}
        \vspace{-2pt}
        \centerline{\footnotesize (c)}
    \end{minipage}

    \vspace{2pt}
    {\centering \caption{Numerical results for $L=100$: (a) open-loop, (b) closed-loop, (c) estimation error.}\label{fig:combined}\par}
\end{figure}

	\section{Conclusion}\label{sec5}
  This paper addresses the output feedback stabilization of semilinear parabolic PDEs in general multidimensional domains. First, we develop a novel nonlinear observer that ensures exponential state tracking. For any given Lipschitz constant, we show that the sensor configuration can be strategically selected via the Berezin–Li–Yau inequality to maintain estimation performance. To the best of the authors' knowledge, this is the first result on nonlinear observer design for such systems over general multidimensional domains. Building upon this framework, a finite-dimensional observer-based control law is synthesized to guarantee exponential stabilization. The feasibility of the design is rigorously verified for spatial dimensions $d \in \{1, 2, 3\}$, overcoming the constraints on Lipschitz constant magnitude prevalent in previous literature. Numerical simulations confirm the efficacy of the proposed strategy.

  Two promising directions for future research are identified: 
  \begin{itemize}
      \item[(1)] Relaxing the Lipschitz continuity requirement to accommodate more general nonlinearities, though this may necessitate constraints on the initial conditions, as seen in linear control studies \cite{barbu2013boundary,munteanu2017stabilisation}.
      \item[(2)]  Extending the framework to stochastic multidimensional parabolic equations under multiplicative noise to broaden its applicability.
  \end{itemize} 

	\section*{Funding}
	This work was supported by Natural Science Foundation of China NSFC-62473281, 62173348, 12571484.

\end{document}